\newcommand{\beq}{\begin{equation}}
\newcommand{\eeq}{\end{equation}}
\newcommand{\bea}{\begin{eqnarray}}
\newcommand{\eea}{\end{eqnarray}}
\newcommand{\bef}{\begin{figure}}
\newcommand{\eef}{\end{figure}}
\newcommand{\bsc}{\begin{scriptstyle}}
\newcommand{\esc}{\end{scriptstyle}}
\newcommand{\bd}{\begin{displaymath}}
\newcommand{\ed}{\end{displaymath}}
\newcommand{\nn}{\nonumber}
\newcommand{\nnl}{\nonumber \\}
\newcommand{\fig}[1]{Fig.\ \ref{#1}}
\def\bmat{\left[ \begin{array}}
\def\emat{\end{array} \right]}
\newcommand{\defn}{\stackrel{\triangle}{=}}
\title{A Contraction Analysis of the Convergence
of Risk-Sensitive Filters}
\author{Bernard C. Levy 
\thanks{B. Levy is with the Department of Electrical and Computer
Engineering, 1 Shields Avenue, University of California, Davis,
CA 95616 (email: bclevy@ucdavis.edu).}
\and Mattia Zorzi
\thanks{M. Zorzi is  with the Department of Electrical
Engineering and Computer Science, University of Liege, Institut
Montefiore B28, 4000 Liege, Belgium (email: mzorzi@ulg.ac.be).}}
\begin{document}

\pagestyle{myheadings}
\thispagestyle{plain}
\markboth{B.~C. LEVY AND M. ZORZI}{CONVERGENCE OF
RISK-SENSITIVE FILTERS}

\maketitle

\begin{abstract} 
A contraction analysis of risk-sensitive Riccati equations
is proposed. When the state-space model is reachable
and observable, a block-update implementation of the
risk-sensitive filter is used to show that the $N$-fold
composition of the Riccati map is strictly contractive with
respect to the Riemannian metric of positive definite 
matrices, when $N$ is larger than the number of
states. The range of values of the risk-sensitivity 
parameter for which the map remains contractive can be
estimated a priori. It is also found that a second
condition must be imposed on the risk-sensitivity parameter
and on the initial error variance to ensure that the solution
of the risk-sensitive Riccati equation remains positive
definite at all times. The two conditions obtained
can be viewed as extending to the multivariable case
an earlier analysis of Whittle for the scalar case.
\end{abstract}

\begin{keywords}
block update, contraction mapping, Kalman filter, partial order,
positive definite matrix cone, Riccati equation, Riemann metric, 
risk-sensitive filtering
\end{keywords}

\begin{AMS}
60G35, 93B35, 93E11
\end{AMS}

\section{Introduction}
\label{sec:intro}

Starting with Kalman and Bucy's paper \cite{KB}, the convergence
of the Kalman filter has been examined in detail, and it soon 
became clear that if the state-space model is stabilizable and
detectable, the filter is asymptotically stable and the
error covariance converges to the unique non-negative definite 
solution of a matching algebraic Riccati equation. However
the classical Kalman filter convergence analysis 
\cite{AM,KSH} is rather intricate and involves several steps,
including first showing that the error covariance is upper
bounded, next proving that with a zero initial value,
it is monotone increasing, so it has a limit, and then 
establishing that the corresponding filter is stable and that 
the limit is the same for all initial covariances. In 1993,
Bougerol \cite{Bou} proposed a more direct convergence proof
based on establishing that the discrete-time Riccati iteration 
is a contraction for the Riemmanian metric associated to
the cone of positive definite matrices. Although this result
attracted initially little notice in the systems and control
community, this approach was adopted by several researchers
\cite{LaL1,LaL2,LeL,LYD} to study the convergence of a number 
of nonlinear matrix iterations. We will use this viewpoint here 
to analyze the convergence of risk-sensitive estimation filters. 
Unlike the Kalman filter which minimizes the mean square
estimation error, risk-sensitive filters \cite{SDJ,Whi} minimize
the expected value of an exponential of quadratic error index, 
which ensures a higher degree of robustness \cite{HSK1,HS} against 
modelling errors. Unfortunately, in spite of extensive studies 
on risk-sensitive and related $H^{\infty}$ filters, results
concerning their convergence remain fragmentary \cite[Chap. 9]{Whi},
\cite[Sec. 14.6]{HSK1}, \cite{BS}. In particular, one question
that remains unresolved is whether there exists an a-priori
upper bound on the risk-sensitivity parameter ensuring the
convergence of the solution of the risk-sensitivite Riccati 
equation to a positive definite solution associated to
a stable filter.

The contraction anaysis presented in this paper relies
on a block implementation of Kalman (risk-neutral) 
and risk-sensitive filters. When the system is
reachable and observable and the block length $N$
exceeds the number of states, it is shown that in
the risk-neutral case, the Riccati equation corresponding 
to the block filter is strictly contractive, which allows us to
conclude that the Riccati equation of Kalman filtering 
has a unique positive definite fixed point. This 
analysis is equivalent to the derivation of \cite{Bou}
which relied on showing that the $N$-fold composition
of the Hamiltonian map associated to the 
risk-neutral Riccati operator is strictly contractive. 
However, it has the advantage that it can be extended
easily to the risk-sensitive case by using the Krein-space 
formulation of risk-sensitive and $H^{\infty}$ filtering 
developed in \cite{HSK2,HSK3}. With this approach, it is
is shown that the $N$-block risk-sensitive Riccati
equation remains strictly contractive as long as
a corresponding observability Gramian is positive
definite. This Gramian is shown to be a monotone
decreasing function of  the risk-sensitivity parameter
$\theta$  with respect to the partial order of non-negative
definite matrices. Accordingly, it is possible to identify
a priori a range $[0,\tau_N)$ of values of the 
risk-sensitivity  parameter $\theta$ for which the block
Riccati equation is strictly contractive. This result 
is used to show that the risk-sensitive Riccati
equation has a unique positive definite fixed-point, but 
because the image of the cone ${\cal P}$ of positive definite 
matrices under the risk-sensitive Riccati map is not entirely 
contained in ${\cal P}$ a second condition must be placed on
$\theta$ and the initial variance $P_0$ of the filter
to ensure that the evolution of the risk-sensitive
Riccati equation stays in ${\cal P}$. The two conditions
obtained can be viewed as extensions to the multivariable
case of those presented in \cite[Chap. 9]{Whi} for scalar 
risk-sensitive Riccati equations.

The paper is organized as follows. The properties of
the Riemann distance for positive definite matrices and
of contraction mappings are reviewed in Section 
\ref{sec:riemann}. The block-update filtering interpretation
of the $N$-fold Riccati equation of Kalman filtering
is described in Section \ref{sec:block} and is extended
to the risk-sensitive case in Section \ref{sec:contrisk}.
This formulation is used to estimate the range of
values of the risk-sensitivity parameter for which the
risk-sensitive Riccati equation is contractive. A
second condition on the risk-sensitivity parameter
and initial condition ensuring that the solution
of the Riccati equation remains positive is obtained
in Section \ref{sec:posit}. An illustrative example is
studied in Section \ref{sec:examp} and conclusions 
as well as a possible extension are presented in
Section \ref{sec:conc}.

\section{Riemann distance and contraction mappings}
\label{sec:riemann}

Let ${\cal P}$ denote the cone of positive definite symmetric
matrices of dimension $n$, and let $\bar{\cal P}$ denote
the cone of non-negative definite matrices forming the closure
of ${\cal P}$. If $P$ is an element of ${\cal P}$
with eigendecomposition
\beq
P = U \Lambda U^T \label{2.1}
\eeq
where $U$ is an orthogonal matrix formed by normalized
eigenvectors of $P$ and $\Lambda = \mbox{diag} \, \{
\lambda_1 , \ldots , \lambda_n \}$ is the diagonal 
eigenvalue matrix of $P$, the symmetric positive square-root
of $P$ is defined as
\[
P^{1/2} = U \Lambda^{1/2} U^T
\]
where $\Lambda^{1/2}$ is diagonal, with entries $\lambda_i^{1/2}$
for $1 \leq i \leq n$. Similarly, the logarithm of $P$ is
the symmetric, not necessarily positive definite, matrix
specified by
\[
\log(P) = U \log(\Lambda ) U^T \: , 
\]
where $\log(\Lambda)$ is diagonal with entries $\log(\lambda_i)$
for $1 \leq i \leq n$. Let $P$ and $Q$ be two positive definite
matrices of ${\cal P}$. Then $P^{-1}Q$ is similar to $P^{-1/2}QP^{-1/2}$,
so they have the same eigenvalues, and $P^{-1/2}QP^{-1/2}$ is
positive definite. Let $s_1 \geq s_2 \geq \ldots \geq s_n >0$
denote the eigenvalues of $P^{-1}Q$ sorted in decreasing order.
The Riemann distance between $P$ and $Q$ is defined as 
\beq
d(P,Q) = ||\log(P^{-1/2}QP^{-1/2})|| = \Big( \sum_{i=1}^n 
( \log (s_i))^2 \Big)^{1/2} \: , \label{2.2}
\eeq
where $||.||$ denotes the matrix Frobenius norm. In addition
to having all the traditional properties of a distance, $d$
has the feature that it is invariant under matrix inversion
and congruence transformations. Specifically, if $M$ denotes
an arbitrary real invertible matrix of dimension $n$,
\beq
d(P,Q) = d(P^{-1},Q^{-1}) = d(MPM^T,MQM^T) \: . \label{2.3} 
\eeq
Furthermore, it was also shown by Bougerol \cite{Bou} that
the translation of ${\cal P}$ by a non-negative definite
symmetric matrix $S$ is a non-expansive map. Specifically, 
\beq
d(P+S,Q+S) \leq  \frac{\alpha}{\alpha+\beta} d(P,Q) \label{2.4}
\eeq
where $\alpha = \max (\lambda_1(P),\lambda_1(Q))$ and
$\beta = \lambda_n (S)$. In these definitions, it is
assumed that the eigenvalues of $P$, $Q$ and $S$ are
sorted in decreasing order, so that $\lambda_1 (P)$
is the largest eigenvalue of $P$, i.e., its spectral
norm, and $\lambda_n (S)$ is the smallest eigenvalue of 
$S$. 

Recall that if $f(\cdot)$ is an arbitrary mapping of
${\cal P}$, $f$ is non-expansive if
\[
d(f(P),f(Q)) \leq d(P,Q) \: ,
\]
and strictly contractive if
\[
d(f(P),f(Q)) \leq c d(P,Q) 
\]
with $0 \leq c < 1$. The least contraction coefficient or
Lipschitz constant of a non-expansive mapping $f$ is defined as
\beq
c(f) = \sup_{P,Q \in {\cal P}, P \neq Q} \frac{d(f(P),f(Q)}{d(P,Q)}
\: . \label{2.5}
\eeq
Clearly, if $f$ and $g$ denote two non-expansive mappings, 
the contraction coefficient $c(f \circ g)$ of the composition 
of $f$ and $g$ satisfies $c (f \circ g) \leq c(f) c(g)$, so if
at least one of the two maps is strictly contractive, the
composition is also strictly contractive. From 
inequality (\ref{2.4}), we deduce that if $\tau_S (P) = P+S$
denotes the translation by a positive definite matrix
$S$, $\tau_S$ is non-expansive, but the bound (\ref{2.4}) does 
not allow us to conclude that $c(\tau_S) < 1$, since when the largest 
eigenvalue of either $P$ or $Q$ goes to infinity, the constant 
$\alpha/(\alpha+\beta)$ tends to one.

The key result that will be used in this paper is that if 
$f$ is a strict contraction of ${\cal P}$ for the distance $d$, 
by the Banach fixed point theorem \cite[p. 244]{AE}, there 
exists a unique fixed point $P$ of $f$ in $\bar{\cal P}$ 
satisfying $P=f(P)$. Furthermore this fixed point can be 
evaluated by performing the iteration $P_{n+1} = f(P_n)$
starting from any initial point $P_0$ of ${\cal P}$. Also
if the $N$-fold composition $f^N$ of a non-expansive map
$f$ is strictly contractive, then $f$ has a unique fixed 
point. We will consider in particular the Riccati-type map
over ${\cal P}$ defined by
\beq
f(P) = M[P^{-1} + \Omega ]^{-1}M^T + W \label{2.6}
\eeq
where $P$, $\Omega$ and $W$ are symmetric real positive definite 
matrices and $M$ is a square real, but not necessarily invertible,
matrix. For this mapping the following result was established
in \cite[Th. 4.4]{LeL}.
\vskip 2ex

\begin{lemma} 
\label{lem1}
$f$ is a strict contraction with 
\beq
c(f) \leq \frac{\lambda_1 (M\Omega^{-1}M^T)}{
\lambda_n (W) + \lambda_1(M\Omega^{-1}M^T)}
\: , \label{2.7}
\eeq
where we use again the convention that the eigenvalues
of positive definite matrices are sorted in decreasing
order. 
\end{lemma}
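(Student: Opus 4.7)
The plan is to decompose $f$ as a composition of simpler maps whose contraction behavior under $d$ is already controlled by results stated in this section: matrix inversion $X \mapsto X^{-1}$ is an isometry by (\ref{2.3}), congruence $X \mapsto MXM^T$ is an isometry when $M$ is invertible by (\ref{2.3}), and translation by a non-negative definite matrix is non-expansive by (\ref{2.4}). The strict contraction will come entirely from the outermost translation by $W$, combined with the a priori Loewner bound $[P^{-1} + \Omega]^{-1} \leq \Omega^{-1}$, which controls the $\alpha$ appearing in (\ref{2.4}) uniformly in $P$.

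I would first treat the case of an invertible $M$. Writing $A_P = M[P^{-1} + \Omega]^{-1}M^T$ and noting $f(P) = A_P + W$, an application of (\ref{2.4}) with $S = W$ gives
\[
d(f(P), f(Q)) \leq \frac{\alpha}{\alpha + \lambda_n(W)} \, d(A_P, A_Q),
\]
where $\alpha = \max(\lambda_1(A_P), \lambda_1(A_Q))$. Since $P^{-1} \geq 0$ implies $P^{-1} + \Omega \geq \Omega$, and inversion reverses the Loewner order on ${\cal P}$, we get $[P^{-1} + \Omega]^{-1} \leq \Omega^{-1}$, hence $A_P \leq M\Omega^{-1}M^T$ and $\lambda_1(A_P) \leq \lambda_1(M\Omega^{-1}M^T)$. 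Because $x \mapsto x/(x + \lambda_n(W))$ is increasing, the prefactor above is bounded by the constant on the right-hand side of (\ref{2.7}).

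It remains to verify $d(A_P, A_Q) \leq d(P, Q)$. With $M$ invertible, congruence is an isometry, so $d(A_P, A_Q) = d([P^{-1} + \Omega]^{-1}, [Q^{-1} + \Omega]^{-1})$. By the inversion isometry this equals $d(P^{-1} + \Omega, Q^{-1} + \Omega)$, which a second application of (\ref{2.4}) (with $S = \Omega$) bounds by $d(P^{-1}, Q^{-1})$; a final use of the inversion isometry gives $d(P, Q)$. The translation-by-$\Omega$ factor is strictly less than one, but this extra slack is not required for the final bound.

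The main obstacle is the case of a singular $M$, where congruence need not map ${\cal P}$ into ${\cal P}$ and the clean isometry argument breaks down. I would handle this by a perturbation: replace $M$ by $M_\epsilon = M + \epsilon I$, which is invertible for all sufficiently small $\epsilon > 0$ outside a finite exceptional set, apply the already proved inequality to the corresponding $f_\epsilon$, and let $\epsilon \to 0$. Since $W \in {\cal P}$ ensures that $f_\epsilon(P), f_\epsilon(Q)$ remain in ${\cal P}$, continuity of $d$ on ${\cal P} \times {\cal P}$ together with continuous dependence of $\lambda_1(M_\epsilon \Omega^{-1} M_\epsilon^T)$ on $\epsilon$ transfers the estimate to $f$ in the limit.
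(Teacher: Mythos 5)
Your proof is correct. Note that the paper does not actually contain a proof of this lemma: it is quoted from \cite[Th.~4.4]{LeL}, so the only comparison available is with that reference, and your argument is essentially the standard one underlying it. The decomposition you use is the natural one: inversion and congruence by an invertible $M$ are isometries by (\ref{2.3}), translation by $\Omega$ is non-expansive by (\ref{2.4}), and the strict contraction is produced entirely by the final translation by $W$, once the uniform Loewner bound $[P^{-1}+\Omega]^{-1}\leq \Omega^{-1}$ caps the constant $\alpha$ of (\ref{2.4}) at $\lambda_1(M\Omega^{-1}M^T)$ independently of $P$ and $Q$; monotonicity of $x\mapsto x/(x+\lambda_n(W))$ then gives exactly (\ref{2.7}), which is strictly less than one because $W\in{\cal P}$ forces $\lambda_n(W)>0$. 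Your handling of singular $M$ by perturbing to $M_\epsilon=M+\epsilon I$ along invertible values and passing to the limit is sound: $f_\epsilon(P)$ and $f_\epsilon(Q)$ remain in ${\cal P}$ because of the added $W$, the distance $d$ is continuous on ${\cal P}\times{\cal P}$, and $\lambda_1(M_\epsilon\Omega^{-1}M_\epsilon^T)$ depends continuously on $\epsilon$, so the estimate survives the limit. The one point worth stating explicitly (you use it implicitly) is that (\ref{2.4}) is being applied to the pair $A_P,A_Q$, which indeed lie in ${\cal P}$ when $M$ is invertible; this is precisely the hypothesis that fails for singular $M$ and that your perturbation step repairs.
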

\vskip 2ex

Note that although the results presented in this paper
use the Riemann distance over ${\cal P}$, other
metrics such as the Thompson metric
\beq
d_T (P,Q) = \max ( \lambda_1 (\log(P^{-1/2}QP^{-1/2})),
\lambda_1 (\log(Q^{-1/2} P Q^{-1/2}))) \: . 
\label{2.8}
\eeq 
have similar properties \cite{Bha,ISY}, and the Thompson
metric is in fact often used to analyze the convergence
of nonlinear matrix iterations \cite{LaL2,LYD}.

\section{Block update filter}
\label{sec:block}

Consider a Gauss-Markov state space model 
\bea
x_{t+1} =A x_t + B u_t \label{3.1} \\
y_t = C x_t + v_t \: , \label{3.2}
\eea
where the state $x_t \in \mathbb{R}^n$, the process 
noise $u_t \in \mathbb{R}^m$ and the observation noise
$v_t \in \mathbb{R}^p$. The noises $u_t$ and $v_t$
are assumed to be independent zero-mean WGN processes
with normalized covariance matrices, so
\[
E \Big[ \bmat {c}
u_t \\ v_t
\emat \bmat {cc}
u_s^T & v_s^T
\emat \Big] = \bmat {cc} 
I_m & 0\\
0 & I_p
\emat \delta_{t-s} \: , 
\]
where 
\[
\delta_t = \left \{ \begin{array} {cc}
1 & t=0 \\
0 & t \neq 0 
\end{array} \right.
\]
denotes the Kronecker delta function. The initial
state vector $x_0$ is assumed independent of noises
$u_t$ and $v_t$ and ${\cal N} (\hat{x}_0,P_0)$ 
distributed. Since we are interested in the asymptotic behavior 
of Kalman and risk-sensitive filters, the matrices $A$, $B$, and
$C$ specifying the state-space model are assumed to be constant.
Then if ${\cal Y}_{t-1}$ denotes the sigma field generated 
by observations $y(s)$ for $0 \leq s \leq t-1$, the 
least-squares estimate $\hat{x}_t = E[x_t|{\cal Y}_{t-1}]$
depends linearly on the observations and can be  evaluated
recursively by the predicted form of the Kalman filter
specified by
\beq   
\hat{x}_{t+1} = A \hat{x}_t + K_t \nu_t
\: , \label{3.3}
\eeq
where the innovations process
\beq
\nu_t \defn y_t - C\hat{x}_t \: . \label{3.4}
\eeq
In (\ref{3.3}), the Kalman gain matrix
\beq
K_t = AP_tC^T(R_t^{\nu})^{-1} \: , \label{3.5}
\eeq
where 
\beq
R_t^{\nu} = E[\nu_t \nu_t^T] = CP_tC^T +I_p
\label{3.6}
\eeq
represents the variance of the innovations process, and
if $\tilde{x}_t = x_t - \hat{x}_t$ denotes the
state prediction error, its variance matrix $P_t =
E[\tilde{x}_t \tilde{x}_t^T]$ obeys the Riccati
equation
\beq
P_{t+1}= r(P_t) \defn A[ P_t^{-1} + C^TC ]^{-1} A^T + BB^T
\label{3.7}
\eeq
with initial condition $P_0$. This equation can also be rewritten
in the equivalent form \cite[p. 325]{KSH} 
\beq
P_{t+1} = (A-K_tC)P_t(A-K_t C)^T + BB^T + K_t K_t^T 
\: , \label{3.8}
\eeq
which will be used later in our analysis.

The Riccati mapping $r(P)$ specified by (\ref{3.7}) has
the form (\ref{2.6}). Unfortunately the matrices $C^TC$
and $BB^T$ are not necessarily invertible, so Lemma \ref{lem1}
is not directly applicable. Under the assumption that
the pairs $(A,B)$ and $(C,A)$ are reachable and observable,
respectively, Bougerol \cite{Bou} was able to show that the 
$n$-fold map $r^n$ is a strict contraction. This was achieved by
considering the $n$-fold composition of the symplectic Hamiltonian
mapping associated to $r$ (see \cite{LaL2} for a study 
of the contraction properties of symplectic Hamiltonian 
mappings). We present below an equivalent derivation of
of Bougerol's result which relies on a block update 
implementation of the Kalman filter.

The starting point is the observation that since $x_t$ is
Gauss-Markov, the downsampled process $x_k^d = x_{kN}$ with
$N$ integer is also Gauss-Markov with state-space model
\bea
x_{k+1}^d &=& A^N x_k^d + {\cal R}_N {\bf u}_k^N \label{3.9} \\ 
{\bf y}_k^N &=& {\cal O}_N x_k^d + {\bf v}_k^N + {\cal H}_N {\bf u}_k^N 
\label{3.10}
\eea
where
\bea
{\bf u}_k^N &=& \bmat {cccc}
u_{kN+N-1}^T & u_{kN+N-2}^T & \ldots & u_{kN}^T
\emat^T \nnl
{\bf y}_k^N &=& \bmat {cccc}
y_{kN+N-1}^T & y_{kN+N-2}^T & \ldots & y_{kN}^T
\emat^T \nnl
{\bf v}_k^N &=& \bmat {cccc}
v_{kN+N-1}^T & v_{kN+N-2}^T & \ldots & v_{kN}^T
\emat^T \:. \nn 
\eea
In the model (\ref{3.9})--(\ref{3.10})
\bea
{\cal R}_N &=& \bmat {cccc}
B & AB & \ldots & A^{N-1}B
\emat \nnl 
{\cal O}_N &=& \bmat {cccc} 
(CA^{N-1})^T & \ldots & (CA)^T & C^T
\emat^T \nn
\eea
denote respectively the $N$-block reachability and observability
matrices of system (\ref{3.1})--(\ref{3.2}, where the blocks 
forming ${\cal O}_N$ are written from bottom to top instead 
of the usual top to bottom convention. If the pairs
$(A,B)$ and $(C,A)$ are reachable and observable, ${\cal R}_N$ 
and ${\cal O}_N$ have full rank for $N \geq n$. In (\ref{3.10}),
if
\[
H_t = \left \{ \begin{array} {cc}
CA^{t-1}B & t \geq 1 \nnl
0 & \mbox{otherwise }
\end{array} \right. 
\]
denotes the impulse response representing the response of 
output $y_t$ in (\ref{3.2}) to the process noise $u_t$ input
in (\ref{3.1}), ${\cal H}_N$ is the $Np \times Nm$ block
Toeplitz matrix defined by
\[
{\cal H}_N = \bmat {cccccc}
0 & H_1 & H_2 & \cdots & H_{N-2} &H_{N-1}\\ 
0 & 0   & H_1 &  H_2 & \cdots & H_{N-2}\\ 
0 & 0   &  0 &  H_1 &  \cdots & H_{N-3}\\ 
\vdots & \vdots &  \vdots &  & \vdots & \vdots\\
0 & 0   & 0 & \cdots & \cdots & H_1 \\
0 & 0   & 0 & \cdots & \cdots & 0 
\emat \: .
\]
Note however that the noise vectors ${\bf u}_k^N$ and
${\bf w}_k^N \defn {\bf v}_k^N +{\cal H}_N {\bf u}_k^N$ 
are correlated since
\[
E \Big[ \bmat {c}
{\bf u}_k^N \\ {\bf w}_k^N 
\emat \bmat {cc}
{\bf u}_{\ell}^{N T} & {\bf w}_{\ell}^{N T}
\emat
= \bmat {cc}
I_{Nm} & {\cal H}_N^T \\
{\cal H}_N & I_{Np} + {\cal H}_N {\cal H}_N^T
\emat \delta_{k-\ell} \: . 
\]
This correlation can be removed by noting that the estimate
of ${\bf u}_k^N$ given ${\bf w}_k^N$ takes the form
\[
\hat{u}_k^N = {\cal G}_N {\bf w}_k^N
\]
where
\[
{\cal G}_N ={\cal H}_N^T (I+ {\cal H}_N{\cal H}_N^T)^{-1} \:.
\]
Then by premultiplying the observation equation (\ref{3.10})
by ${\cal R}_N {\cal G}_N$ and subtracting it from (\ref{3.9})
we obtain the new downsampled state dynamics
\beq
x_{k+1}^d = \alpha_N x_k^d + {\cal R}_N \tilde{\bf u}_k^N
+{\cal R}_N {\cal G}_N {\bf y}_k^N
\label{3.11}
\eeq
with
\[
\alpha_N \defn A^N - {\cal R}_N {\cal G}_N {\cal O}_N \: , 
\]
where the zero mean white Gaussian noise $\tilde{\bf u}_k^N 
= {\bf u}_k^N - \hat{\bf u}_k^N$ is now uncorrelated with
observation noise ${\bf w}_k^N$, and has the invertible
variance matrix
\bea
{\cal Q}_N &=& I_{Nm}-{\cal H}_N^T [ I_{Np} + {\cal H}_N
{\cal H}_N^T ]^{-1} {\cal H}_N  \nnl
&=& [ I_{Nm} + {\cal H}_N^T {\cal H}_N ]^{-1} \: . \nn
\eea

The Kalman filter corresponding to the downsampled state-space
model (\ref{3.10})--(\ref{3.11}) can be interpreted as a block
update filter, where the state estimate is updated only after
a block of $N$ observations has been collected. The Riccati
equation corresponding to this Kalman filter is then given by
\beq
P_{k+1}^d = r_d (P_k^d) = \alpha_N [ (P_k^d)^{-1} + \Omega_N ]^{-1}
\alpha_N^T + W_N \: , \label{3.12}
\eeq
where the $n \times n$ symmetric real matrices 
\bea
\Omega_N &\defn& {\cal O}_N^T [I + {\cal H}_N{\cal H}_N^T]^{-1} {\cal O}_N 
\label{3.13} \\
\nnl
W_N &\defn& {\cal R}_N [I+ {\cal H}_N^T {\cal H}_N]^{-1} {\cal R}_N^T 
\label{3.14}
\eea
are positive definite for $N \geq n$ whenever the pairs
$(C,A)$ and $(A,B)$ are observable and reachable, respectively.
In fact, $\Omega_N$ and $W_N$ can be viewed as observability 
and reachability Wronskians for the state-space model
(\ref{3.1})--(\ref{3.2}).

From Lemma \ref{lem1}, we can therefore conclude that $r_d(\cdot)$ is
a strict contraction. However, since $P_k^d$ is the variance
matrix of the one-step ahead prediction error for state
$x_{kN}$, $r_d$ coincides with the $N$-fold composition
$r^N$ of Riccati map $r$, which must have therefore a unique
fixed point $P$ in ${\cal P}$. This establishes the
following classical Kalman filter convergence result
\cite{AM,KSH}.
\vskip 2ex

\begin{theorem}
\label{theo1}
If in system (\ref{3.1})-(\ref{3.2}) the pairs $(A,B)$ and $(C,A)$ are 
reachable and observable, respectively, the algebraic Riccati equation
$P=r(P)$ admits a unique positive definite solution, and as
$t$ tends to infinity, for any positive definite initial
condition $P_0$, $P_t$ tends to $P$ as $t$ tends to infinity,
and the Kalman gain matrix $K_t$ tends to 
\[
K =  APC^T [CPC^T +I]^{-1}
\]
which has the property that the matrix $A-KC$ is stable.
\end{theorem}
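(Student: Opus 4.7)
The plan is to derive Theorem \ref{theo1} from the strict contractivity of the block-update map $r_d = r^N$. First I would apply Lemma \ref{lem1} to (\ref{3.12}) with $M = \alpha_N$, $\Omega = \Omega_N$ and $W = W_N$. Reachability of $(A,B)$ and observability of $(C,A)$ ensure that $\Omega_N$ and $W_N$ defined in (\ref{3.13})--(\ref{3.14}) are positive definite for $N \geq n$, so the lemma yields strict contractivity of $r_d$ on $({\cal P},d)$. Completeness of $({\cal P},d)$ and the Banach fixed-point theorem then supply a unique $P \in {\cal P}$ with $r_d(P) = P$, and the identity $P = W_N + \alpha_N[P^{-1} + \Omega_N]^{-1}\alpha_N^T$ confirms $P \in {\cal P}$. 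Any fixed point of $r$ is also fixed by $r^N = r_d$ and hence coincides with $P$; conversely $r_d(r(P)) = r(r_d(P)) = r(P)$ shows $r(P)$ is a fixed point of $r_d$, and since the block formula for $r_d$ includes the additive term $W_N \in {\cal P}$ we have $r(P) \in {\cal P}$, so uniqueness forces $r(P) = P$. This yields existence and uniqueness of a positive definite solution of the algebraic Riccati equation.

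Next I would upgrade the geometric convergence $P_{kN} = r_d^k(P_0) \to P$ to convergence of the full sequence. Writing $t = kN + j$ with $0 \leq j < N$ and $P_t = r^j(P_{kN})$, continuity of $r$ on ${\cal P}$---a composition of inversion, addition and congruence by $A$---combined with $P_{kN} \to P$ in ${\cal P}$ gives $P_t \to r^j(P) = P$. Continuity of (\ref{3.5}) in $P_t$ then yields $K_t \to K = APC^T(CPC^T+I)^{-1}$.

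The main obstacle will be the closed-loop stability claim, which is not itself a contraction statement. For this I would evaluate (\ref{3.8}) at the fixed point to obtain the Stein equation $P = (A-KC)P(A-KC)^T + BB^T + KK^T$. If $v$ is a left eigenvector of $A-KC$ with eigenvalue $\lambda$, pre- and post-multiplication by $v^*$ and $v$ yields
\[
(1-|\lambda|^2)\,v^* P v = v^*(BB^T + KK^T)v \geq 0 \: .
\]
Since $P \in {\cal P}$, this excludes $|\lambda| > 1$, while $|\lambda| = 1$ forces $v^* B = 0$ together with $v^* A = v^*(A-KC) + v^* K C = \lambda v^*$, contradicting reachability of $(A,B)$ via the PBH rank test. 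Hence every eigenvalue of $A-KC$ lies strictly inside the unit circle.
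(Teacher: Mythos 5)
Your proof is correct and follows essentially the same route as the paper: Lemma \ref{lem1} applied to the block-update Riccati map (\ref{3.12}) gives strict contractivity of $r_d = r^N$, the Banach fixed-point theorem gives the unique fixed point, and stability of $A-KC$ comes from the Stein equation (\ref{3.15}). You merely spell out details the paper delegates to Section \ref{sec:riemann} and to the cited references (the transfer of the fixed point from $r^N$ to $r$, and the left-eigenvector/PBH argument behind the Lyapunov stability step).
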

\vskip 2ex

Given the fixed point $P >0$, the stability of $A-KC$ 
is obtained by applying the Lyapunov stability theorem to
equation
\beq
P = (A-KC)P(A-KC)^T +BB^T +KK^T \label{3.15} 
\eeq
(see \cite[p. 80]{AM}). 

One unsatisfactory aspect of the contraction approach
to the derivation of Theorem \ref{theo1} is its requirement that
the system should be reachable and observable, instead
of the weaker stabilizability and detectability conditions 
required by conventional Kalman filter convergence proofs
\cite{AM,KSH}. The stronger conditions conditions are needed 
to ensure that the Riccati evolution takes place entirely in 
the cone of positive definite matrices. On the other hand,
if the system is reachable and observable, the limit
$P$ is guaranteed to be positive definite, instead
of just nonnegative definite under the usual assumptions.
Finally, note that the block update implementation of the 
Kalman filter which was used here to show that $r_d=r^N$
is a strict contraction is equivalent to Bougerol's derivation
in \cite{Bou}, but as shown below it can be extended
more easily to the risk-sensitive case.

\section{Contraction property of the risk-sensitive 
Riccati equation}
\label{sec:contrisk}

For the state-space model (\ref{3.1})--(\ref{3.2}), the 
risk-sensitive estimate $\hat{x}_t$ solves the exponential 
quadratic minimization problem \cite{Whi,SFB}
\beq
\hat{x}_t = \mbox{arg} \, \min_{\xi \in \mathbb{R}^n} 
\frac{1}{\theta} \log \Big( E[ \exp \big( \frac{\theta}{2}
||D(x_t-\xi)||^2 \big) | {\cal Y}_{t-1}] \Big) 
\: , \label{4.1}
\eeq
where $D \in {\mathbb R}^{q \times n}$ with $q \leq n$
is assumed to have full row rank, and $||z|| = (z^T z)^{1/2}$
denotes the Euclidean vector norm. The parameter $\theta$
appearing in (\ref{4.1}) is called the risk-sensitivity
parameter. The resulting estimate $\hat{x}_t$ obeys the
recursion (\ref{3.3})-(\ref{3.4}), where 
\beq
K_t = A(P_t^{-1} -\theta D^TD)^{-1} C^T (R_t^{\nu})^{-1}
\label{4.2}
\eeq
with
\beq
R_t^{\nu} = C(P_t^{-1}-\theta D^TD)^{-1}C^T + I
\:, \label{4.3}
\eeq
and where $P_t$ obeys the risk-sensitive Riccati equation
\beq
P_{t+1} = r^{\theta} (P_t) = A[P_t^{-1} +C^TC-\theta D^TD]^{-1}A^T
+ BB^T \: . \label{4.4}
\eeq
Our analysis will use the fact that the risk-sensitive
Riccati equation can be rewritten as
\beq
P_{t+1} = (A-K_tC)[P_t^{-1}-\theta D^TD]^{-1}(A-K_t C)^T 
+ BB^T +K_t K_t^T \: . \label{4.5}
\eeq

The values $\theta =0$, $\theta <0$ and $\theta >0$ of the 
risk-sensitivity parameter correspond respectively to the
risk-neutral, risk-seeking, and risk-averse cases. When
$\theta =0$, the risk-sensitive filter reduces to the 
Kalman fillter studied in the previous section, and when
$\theta <0$ the matrix $C^TC -\theta D^T D$ is non-negative
definite and can be rewritten as $\tilde{C}^T \tilde{C}$
where the pair formed by
\[
\tilde{C} \defn \bmat {c}
C \\
(-\theta)^{1/2} D 
\emat
\]
and $A$ is necessarily observable if $(C,A)$ is observable.
Accordingly, the convergence result of Theorem \ref{theo1}
is applicable to this problem, and in the remainder of this
paper our attention will focus on the risk-averse case
with $\theta >0$.

An interesting feature of the risk-sensitive filter is that
it can be interpreted as solving a standard least-squares
filtering problem in Krein space \cite{HSK2,HSK3}. We will
use this viewpoint here to extend the block filtering idea
of the previous section to the risk-sensitive case. The
Krein-space state-space model consists of dynamics (\ref{3.1})
and observations (\ref{3.2}), to which we must adjoin the
risk-sensitive observations
\beq
0 = D x_t + v_t^R \: . \label{4.6}
\eeq
The components of noise vectors $u_t$, $v_t$ and $v_t^R$ 
now belong to a Krein space and have the inner product
\beq
\left \langle \bmat {c}
u_t \\
v_t \\
v_t^R 
\emat
\, , \,
\bmat {c}
u_s \\
v_s \\
v_s^R 
\emat \right \rangle
= \bmat {ccc}
I_m & 0 & 0\\
0 & I_p & 0 \\
0 & 0 & -\theta^{-1} I_q \\
\emat \delta_{t-s} \label{4.7} 
\eeq
The $N$-step observability matrix of the pair $(D,A)$ 
is denoted as 
\[
{\cal O}_N^R = \bmat {cccc} 
(DA^{N-1})^T & \ldots & (DA)^T & D^T 
\emat^T 
\]
and if 
\[
L_t = \left \{ \begin{array} {cc}
DA^{t-1}B & t \geq 1 \\
0 & \mbox{otherwise}
\end{array} \right.
\]
denotes the impulse response from input $u_t$ to the 
risk-sensitive observation output, the corresponding
$N$-block Toeplitz matrix takes the form 
\[
{\cal L}_N = \bmat {cccccc}
0 & L_1 & L_2 & \cdots & L_{N-2} &L_{N-1}\\ 
0 & 0   & L_1 &  L_2 & \cdots & L_{N-2}\\ 
0 & 0   &  0 &  L_1 &  \cdots & L_{N-3}\\ 
\vdots & \vdots &  \vdots &  & \vdots & \vdots\\
0 & 0   & 0 & \cdots & \cdots & L_1 \\
0 & 0   & 0 & \cdots & \cdots & 0 
\emat \: .
\]
Then if
\[
{\bf v}_k^{R N} = \bmat {cccc}
(v_{kN+N-1}^R)^T & (v_{kN+N-2}^R)^T & \ldots & (v_{kN}^R)^T
\emat^T \: , 
\]
the $N$-block risk-sensitive observation for the downsampled
process $x_k^d$ can be expressed as
\beq
{\bf 0} = {\cal O}_N^R x_k^d + {\bf v}_k^{R N} + {\cal L}_N {\bf u}_k^N
\: . \label{4.8} 
\eeq
The Krein space inner product of observation noise vector
\[
\bmat{c}
{\bf w}_k^N\\
{\bf w}_k^{R N} 
\emat = \bmat {c} 
{\bf v}_k^N\\
{\bf v}_k^{R N} 
\emat + \bmat {c}
{\cal H}_N \\
{\cal L}_N
\emat {\bf u}_k^N
\]
with itself admits the block LDU decomposition
\bea
\lefteqn{\left \langle \bmat {c}
{\bf w}_k^N \\
{\bf w}_k^{R N}
\emat \, , \, \bmat {c}
{\bf w}_k^N \\
{\bf w}_k^{R N} 
\emat \right \rangle 
\defn {\cal K}_N^{\theta}}\nnl
&=& \bmat {cc}
I_{Np} & 0 \\
0 & -\theta^{-1} I_{Nq}
\emat + \bmat {c}
{\cal H}_N \\
{\cal L}_N \emat \bmat {cc}
{\cal H}_N^T & {\cal L}_N^T 
\emat \nnl
&=& \bmat {cc}
I_{Np} & 0 \\
{\cal L}_N{\cal H}_N^T (I_{Np} + {\cal H}_N{\cal H}_N^T)^{-1} & I_{Nq}
\emat 
\bmat {cc}
I_{Np} + {\cal H}_N{\cal H}_N^T & 0 \\
0 & S_N^{\theta}
\emat \nnl
&& \hspace*{1in} \times \bmat {cc}
I_{Np} & (I + {\cal H}_N{\cal H}_N^T)^{-1} {\cal H}_N{\cal L}_N^T \\
0 & I_{Nq}
\emat \: , \label{4.9}
\eea
where
\beq
S_N^{\theta} \defn -\theta^{-1} I_{Nq} + {\cal L}_N(I_{Nm}+{\cal H}_N^T{\cal
H}_N)^{-1}{\cal L}_N^T \label{4.10} 
\eeq
denotes the Schur complement of the $(1,1)$ block inside ${\cal
K}_N^{\theta}$. The projection of noise vector ${\bf u}_k^N$ on 
the Krein subspace spanned by the observation noise vector 
$\bmat {cc} ({\bf w}_k^N)^T & ({\bf w}_k^{R N})^T \emat^T$ is 
then given by 
\[
\hat{\bf u}_k^N = \bmat {cc}
{\cal G}_N^{\theta} & {\cal G}_N^{R \theta} 
\emat \bmat {c}
{\bf w}_k^N \\
{\bf w}_k^{N R}
\emat \: ,
\]
where 
\[
\bmat {cc}
{\cal G}_N^{\theta} & {\cal G}_N^{R \theta}
\emat = \bmat {cc}
{\cal H}_N^T & {\cal L}_N^T 
\emat ({\cal K}_N^{\theta})^{-1} \: ,
\]
and the residual $\tilde{\bf u}_k^N = {\bf u}_k^N -\hat{\bf u}_k^N$
has for inner product
\bea
\langle \tilde{\bf u}_k^N \, , \, \tilde{\bf u}_k^N \rangle
\defn {\cal Q}_N^{\theta}
&=& I_{Nm} - \bmat {cc}
{\cal G}_N^{\theta} & {\cal G}_N^{R \theta} 
\emat {\cal K}_N^{\theta}
\bmat {c}
({\cal G}_N^{\theta})^T \\ ({\cal G}_N^{R \theta})^T \emat  \nnl[2ex]
&=& [ I_{Nm} + {\cal H}_N^T {\cal H}_N - \theta {\cal L}_N^T {\cal
L}_N ]^{-1} \: . \label{4.11}
\eea
The matrix ${\cal Q}_N^{\theta} $ will be positive definite 
if and only if
\beq
\theta < \theta_N \defn 1/\lambda_1 ( {\cal L}_N (I_{Nm} + {\cal H}_N^T{\cal H}_N)^{-1} 
{\cal L}_N^T ) \: . \label{4.12}
\eeq
Note that this condition is also necessary and sufficient to ensure
that the Schur complement $S_N^{\theta}$ in (\ref{4.10}) is negative
definite. Then by multiplying the observation equation obtained by
combining equations (\ref{3.10}) and (\ref{4.8}) by 
${\cal R}_N \bmat {cc} {\cal G}_N^{\theta} & 
{\cal G}_N^{R \theta} \emat$ and subtracting it from
(\ref{3.9}), we obtain the state-space equation 
\beq
x_{k+1}^d = \alpha_N^{\theta} x_k^d + {\cal R}_N \tilde{\bf u}_k^N
+{\cal R}_N {\cal G}_N^{\theta} {\bf y}_k^N
\label{4.13}
\eeq
with
\[
\alpha_N^{\theta} \defn A^N - {\cal R}_N [ {\cal G}_N^{\theta} {\cal O}_N 
+ {\cal G}_N^{R \theta} {\cal O}_N^R ] \: , 
\]
where the driving noise is now orthogonal to the noises ${\bf w}_k^N$
and ${\bf w}_k^{R N}$ appearing in observation equations (\ref{3.10})
and (\ref{4.8}). Accordingly, the Riccati equation associated to the
downsampled model takes the form 
\beq
P_{k+1}^d = r_d^{\theta} (P_k^d) \defn \alpha_N^{\theta}
[ (P_k^d)^{-1} + \Omega_N^{\theta} ]^{-1} ( \alpha_N^{\theta})^T
+ W_N^{\theta} \: , \label{4.14}
\eeq
where 
\bea
\Omega_N^{\theta} &=& \bmat {cc} 
{\cal O}_N^T & ({\cal O}_N^R)^T 
\emat ({\cal K}_N^{\theta})^{-1} \bmat {c}
{\cal O}_N \\
{\cal O}_N^R
\emat \nnl 
&=& \Omega_N + {\cal J}_N^T (S_N^{\theta})^{-1} {\cal J}_N
\label{4.15}
\eea
with
\[
{\cal J}_N \defn {\cal O}_N^R  -{\cal L}_N{\cal H}_N^T [I+{\cal H}_N{\cal
H}_N^T]^{-1}{\cal O}_N 
\]
and
\beq
W_N^{\theta} = {\cal R}_N{\cal Q}_N^{\theta}{\cal R}_N^T 
\:. \label{4.16}
\eeq

For $\theta=0$, the matrices $\Omega_N^{\theta}$ and 
$W_N^{\theta}$ coincide with the risk-neutral Gramians 
$\Omega_N$ and $W_N$ defined in (\ref{3.13}) and (\ref{3.14}).
These matrices are positive definite for $N \geq n$ 
if and only if the pairs $(C,A)$ and $(A,B)$ are observable
and reachable, respectively. Since ${\cal Q}_N^{\theta}$
is positive definite for $0 \leq \theta < \theta_N$, we
deduce that $W_N^{\theta} >0$ over this range as long
as $(A,B)$ is reachable and $N \geq n$. On the other 
hand, the Schur complement matrix $S_N^{\theta}$
is negative definite for $0 \leq \theta < \theta_N$, so
\[
\Omega_N^{\theta} < \Omega_N \label{4.17}
\]
over this range. To establish that there exists a
range $0 \leq \theta < \tau_N$ over which $\Omega_N^{\theta}$
remains positive definite when $(C,A)$ is observable,
we use the following observation.  
\vskip 2ex

\begin{lemma} 
\label{lem2}
Over $0 \leq \theta < \theta_N$, the 
Gramians $\Omega_N^{\theta}$ and $W_N^{\theta}$ are 
monotone decreasing, and monotone nondecreasing, respectively,
with respect to the partial order defined on nonnegative 
definite matrices.
\end{lemma}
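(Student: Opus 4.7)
The plan is to derive both monotonicity claims directly from the closed-form expressions (\ref{4.11}), (\ref{4.15}) and (\ref{4.16}), invoking only two elementary facts about the Loewner partial order: inversion reverses the order among matrices of like definiteness, and a congruence $X\mapsto MXM^T$ preserves it.

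For $W_N^\theta = {\cal R}_N {\cal Q}_N^\theta {\cal R}_N^T$, I would observe from (\ref{4.11}) that
$({\cal Q}_N^\theta)^{-1}= I_{Nm}+{\cal H}_N^T{\cal H}_N-\theta\,{\cal L}_N^T{\cal L}_N$
is positive definite on $[0,\theta_N)$ by condition (\ref{4.12}), and is affine in $\theta$ with nonpositive slope $-{\cal L}_N^T{\cal L}_N \leq 0$. Consequently $({\cal Q}_N^\theta)^{-1}$ is monotone nonincreasing in the Loewner order; inversion among positive definite matrices then yields ${\cal Q}_N^\theta$ monotone nondecreasing, and conjugating by ${\cal R}_N$ gives the claim for $W_N^\theta$.

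For $\Omega_N^\theta$, I would use the second form in (\ref{4.15}), $\Omega_N^\theta = \Omega_N + {\cal J}_N^T (S_N^\theta)^{-1} {\cal J}_N$, in which neither $\Omega_N$ nor ${\cal J}_N$ depends on $\theta$. From (\ref{4.10}), $S_N^\theta = -\theta^{-1} I_{Nq} + {\cal L}_N (I_{Nm}+{\cal H}_N^T{\cal H}_N)^{-1}{\cal L}_N^T$ is negative definite on $(0,\theta_N)$ (the Schur-complement reformulation of (\ref{4.12}) noted in the excerpt), and as a function of $\theta$ its only $\theta$-dependence is through the scalar term $-\theta^{-1}$, which increases with $\theta$. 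Hence $S_N^\theta$ is monotone nondecreasing among negative definite matrices, and inversion (which again reverses the order) gives $(S_N^\theta)^{-1}$ monotone nonincreasing. Congruence by ${\cal J}_N$ preserves this, so $\Omega_N^\theta$ is monotone decreasing.

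The only minor subtlety is the endpoint $\theta=0$, where $S_N^\theta$ itself is not defined in (\ref{4.10}). This is handled by noting that $-\theta^{-1}\to-\infty$ as $\theta\to 0^+$ forces $(S_N^\theta)^{-1}\to 0$, so $\Omega_N^\theta \to \Omega_N = \Omega_N^0$ by continuity, and the monotonicity established on $(0,\theta_N)$ extends to $[0,\theta_N)$. No substantive obstacle arises; the entire argument is essentially bookkeeping on the Loewner order, combined with the two standard facts invoked at the outset.
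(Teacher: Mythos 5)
Your argument is correct and takes essentially the same route as the paper: both rest on the facts that $({\cal Q}_N^{\theta})^{-1}$ is affine in $\theta$ with slope $-{\cal L}_N^T{\cal L}_N$ and that $S_N^{\theta}$ depends on $\theta$ only through the term $-\theta^{-1}I$, combined with order-reversal under inversion and order-preservation under congruence by ${\cal R}_N$ and ${\cal J}_N$. The only cosmetic difference is that the paper expresses this as a sign computation of the derivatives $\frac{d}{d\theta}(S_N^{\theta})^{-1}$ and $\frac{d}{d\theta}{\cal Q}_N^{\theta}$, whereas you compare two parameter values directly (and you additionally address the endpoint $\theta=0$, which the paper leaves implicit).
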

\vskip2ex

\begin{proof} We have
\[
\frac{d~}{d\theta} (S_N^{\theta})^{-1}= -(S_N^{\theta})^{-1} 
\Big(\frac{d~}{d\theta}S_N^{\theta} \Big) (S_N^{\theta})^{-1} 
= -(\theta S_N^{\theta})^{-2} < 0 
\]
and 
\[
\frac{d~}{d\theta} Q_N^{\theta}= -Q_N^{\theta}
\frac{d~}{d\theta} (Q_N^{\theta})^{-1} Q_N^{\theta} 
= Q_N^{\theta}{\cal L}_N^T {\cal L}_N Q_N^{\theta} \geq 0 \: .  
\]
\qquad
\end{proof}
\vskip 2ex

To understand why $\Omega_N^{\theta}$ and $W_N^{\theta}$
vary in opposite direction as $\theta$ increases, note that
$W_N^{\theta}$ can be viewed as a measure of the uncertainty
introduced by the process noise in the state-space model,
whereas $\Omega_N^{\theta}$ is a measure of the information
about the state contained in a block observation. As the
risk-sensitivity parameter $\theta$ increases, it is natural 
that the uncertainty matrix $W_N^{\theta}$ should increase and 
the information matrix $\Omega_N^{\theta}$ should decrease.

Let $\tau_N < \theta_N$ be the first value of $\theta$ for 
which $\Omega_N^{\theta}$ becomes singular. Then since
$\Omega_N^{\theta}$ and $W_N^{\theta}$ are positive definite
for $\theta \in [0,\tau_N)$, we conclude that over this range
the Riccati map $r_d^{\theta}$ is strictly contractive and has
a unique fixed point $P$ in ${\cal P}$. Like the
risk-neutral case, we have $r_d^{\theta} = (r^{\theta})^N$.
However, because the image $r^{\theta}({\cal P})$ is not
completely contained in ${\cal P}$, to ensure that $P$
is also the unique fixed point of $r^{\theta}$, we must also
require that $r^{\theta} (P) \in {\cal P}$. Note indeed that
if 
\beq
P = (r^{\theta})^N (P) \: , \label{4.18}
\eeq
by applying $r^{\theta}$ to both sides of (\ref{4.18}), we 
obtain
\[
r^{\theta}(P) = (r^{\theta})^N (r^{\theta} (P))
\]
so $r^{\theta}(P)$ is a fixed point of $(r^{\theta})^N$. If
$r^{\theta}(P) \in {\cal P}$, we must have
\[
P= r^{\theta} (P)
\]
since $(r^{\theta})^N$ has a unique fixed point in ${\cal P}$.  

At this point it is worth pointing out that until now we
have ignored an important constraint \cite{BS,HSK1} for the 
risk-sensitive filter, namely that the matrix
\beq
V_t = (P_t^{-1} - \theta D^TD)^{-1} \label{4.19}
\eeq
should be positive definite for all $t$. If this condition 
is satisfied, then the fixed point $P$ of $r_d^{\theta}$ will
be in ${\cal P}$, ensuring that it is the unique fixed
point of $r^{\theta}$.

\section{Positiveness conditions for $V_t$}
\label{sec:posit}

In this section we identify conditions on the initial
covariance $P_0$ and risk-sensitivity parameter $\theta$ which 
ensure that the trajectory of iteration $P_{t+1} = r^{\theta}
(P_t)$ satisfies $V_t >0$ for all $t$. Our analysis will
exploit the monotonicity of Riccati operator $r^{\theta} (P)$
with respect to the partial order of positive definite matrices. 
\vskip 2ex

\begin{lemma} 
\label{lem3}
Let $P_1$ and $P_2$ be two matrices
in ${\cal P}$ such that $P_1 \geq P_2$ and $P_1^{-1}
- \theta D^TD > 0$. Then
\beq
r^{\theta} (P_1) \geq r^{\theta} (P_2) \: . \label{5.1}
\eeq
\end{lemma}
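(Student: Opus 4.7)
The plan is to exploit the standard order-reversing property of matrix inversion on $\mathcal{P}$, namely that $M_1 \geq M_2 > 0$ implies $M_2^{-1} \geq M_1^{-1}$, and then to propagate the inequality through the congruence by $A$ and the additive shift by $BB^T$ in the definition of $r^{\theta}$.

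First I would verify that both inverses appearing in $r^{\theta}(P_1)$ and $r^{\theta}(P_2)$ are well defined by checking that the matrices being inverted are positive definite. By hypothesis, $P_1^{-1} - \theta D^TD > 0$. Since $P_1 \geq P_2 > 0$ implies $P_2^{-1} \geq P_1^{-1}$, we immediately get $P_2^{-1} - \theta D^TD \geq P_1^{-1} - \theta D^TD > 0$. Adding the nonnegative term $C^TC$ to each gives
\[
P_2^{-1} + C^TC - \theta D^TD \; \geq \; P_1^{-1} + C^TC - \theta D^TD \; > \; 0 .
\]

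Next I would invert this inequality, using the order-reversing property again, to obtain
\[
[P_1^{-1} + C^TC - \theta D^TD ]^{-1} \; \geq \; [P_2^{-1} + C^TC - \theta D^TD ]^{-1} .
\]
The remaining step is to apply the congruence transformation $X \mapsto AXA^T$, which preserves the partial order on symmetric matrices, and to add $BB^T$ to both sides. Both operations are order-preserving, so the resulting inequality is exactly $r^{\theta}(P_1) \geq r^{\theta}(P_2)$, as required.

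There is really no serious obstacle here; the only subtle point is making sure that the inverse of $P_2^{-1} + C^TC - \theta D^TD$ exists, and this is precisely where the assumption $P_1^{-1} - \theta D^TD > 0$ combined with $P_1 \geq P_2$ is used. Without the assumption on $P_1$, one could not guarantee that $r^{\theta}(P_2)$ is even well defined as an element of $\mathcal{P}$, which is the reason the lemma must be stated with this hypothesis rather than as a global monotonicity statement on all of $\mathcal{P}$.
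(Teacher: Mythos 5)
Your proof is correct and follows essentially the same route as the paper's: the paper likewise argues that $r^{\theta}$ is built from two nested order-reversing inversions, two order-preserving translations, and an order-preserving congruence by $A$, so the composition is monotone. Your version simply spells out the chain of inequalities explicitly, including the (correct) observation that the hypothesis $P_1^{-1}-\theta D^TD>0$ together with $P_2^{-1}\geq P_1^{-1}$ guarantees that both inner inverses exist.
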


\begin{proof}
The monotonicity of $r^{\theta}$ is due to the fact
that the inversion of positive definite matrices 
reverses their partial order. In addition, congruence 
transformations and translation by symmetric matrices 
preserve the partial order. Since the operator   
$r^{\theta}(P)$ in (\ref{4.4}) can be expressed in terms of
two nested inversions of positive definite matrices, two
matrix translations and a congruence transformation, it
is monotone in $P$.\qquad
\end{proof}
\vskip 2ex

Next, observe that for any $n \times p$ observer gain
matrix $G$, the risk-sensitive Riccati equation (\ref{4.5})
can be rewritten as 
\bea
P_{t+1} &=& (A -GC)(P_{t}^{-1}- \theta D^TD)^{-1} (A-GC)^T
+ GG^T +BB^T \nnl
&& - [(A-GC)(P_t^{-1}-\theta D^TD)^{-1}C^T -G](R_t^{\nu})^{-1} \nnl
&& \hspace*{0.5in} \times [(A-GC)(P_t^{-1} - \theta D^T D)^{-1}C^T -G]^T \: .
\label{5.2} 
\eea
This expression can be obtained by writing $A= (A-GC)+GC$
in (\ref{4.5}) and performing simple algebraic manipulations.
While it may appear surprising that a free matrix gain $G$
can be introduced in the equation, the above modification has
actually a simple explanation. Consider the state-space model
(\ref{3.1})--(\ref{3.2}). We can always design a preliminary 
suboptimal observer
\beq
\hat{x}_{t+1}^S = A \hat{x}_t^S + G(y_t -C\hat{x}_t^S)
\: . \label{5.3}
\eeq
Then the residual $\tilde{x}_t^S = x_t - \hat{x}_t^S$ admits
the state-space model
\bea
\tilde{x}_{t+1}^S &=& (A-GC) \tilde{x}_t^S + Bu_t -Gv_t \nnl
y_t - C \hat{x}_t^S &=&  C \tilde{x}_t^S + v_t \: , \label{5.4}
\eea
for which the only difference with respect to the original model
(\ref{3.1})--(\ref{3.2}) is that the process noise $Bu_t -Gv_t$ and
measurement noise $u_t$ are now correlated. The risk-neutral
and risk-sensitive problems associated to the original model
(\ref{3.1})--(\ref{3.2}) and modified model (\ref{5.4}) are 
exactly the same since observations $y_t$ and 
\beq
y_t^S \defn y_t - C\hat{x}_t^S \label{5.5} 
\eeq
can be obtained causally from each other. In particular, the
variance matrices $P_t$ of the error are the same for both
models. Thus it should not be a surprise that the solution
$P_t$ of Riccati equation (\ref{4.5}) should also solve
the risk-sensitive Riccati equation (\ref{5.2}) corresponding 
to modified model (\ref{5.4}).  

One important advantage of introducing the free matrix gain $G$
is that when the pair $(C,A)$ is observable, the characteristic
polynomial of the closed-loop observer matrix $A-GC$ can be 
assigned arbitrarily \cite{Kai}. In particular, it is possible
to ensure that the matrix $A-GC$ is stable, i.e. all its 
eigenvalues are strictly inside the unit circle. In this
case, let
\[
r \defn \max_{1 \leq i \leq n} |\lambda_i (A-GC)|
\]
denote its spectral radius. For $\rho <1/r$, the 
matrix $\rho (A-GC)$ will also be stable, and when $(A,B)$
is reachable, the algebraic Lyapunov equation (ALE)
\beq
\Sigma_{\rho} = \rho^2 (A-GC) \Sigma_{\rho} (A-GC)^T 
+ BB^T + GG^T \label{5.6}
\eeq
admits a unique positive definite solution 
\beq
\Sigma_{\rho} = \sum_{k=0}^{\infty} \rho^{2k} (A-GC)^k (BB^T + GG^T)
((A-GC)^k)^T \: . \label{5.7}
\eeq  
Note that $\Sigma_{\rho}$ is positive definite if and only if 
the pair $A-GC$, $\bmat {cc} B & G \emat$ is reachable. But if
this pair is not reachable, by the Popov-Belevich-Hautus
(PBH) test \cite[p. 366]{Kai}, there must be a left eigenvector
$z^T$ of $A-GC$ which is orthogonal to the column space of
$\bmat {cc} B & G \emat$, so 
\[
z^T (A-GC) = \lambda z^T \hspace*{0.15in}, \hspace*{0.15in}
z^T B = z^T G = 0 \: .
\]
This implies $z^T A = \lambda z^T$, so $z^T$ is a left eigenvector
of $A$ perpendicular to the column space of $B$, which implies
that $(A,B)$ is not reachable, a contradiction.

If we select $1 < \rho < 1/r$, the matrix $\Sigma_{\rho}$ 
is positive definite and the matrix
\beq
M \defn (1-\rho^{-2}) \Sigma_{\rho}^{-1} - \theta D^T D 
\label{5.8}
\eeq
will be non-negative definite if and only if the matrix
\[
\tilde{M} \defn I_n -\theta \frac{\rho^2}{\rho^2 -1} \Sigma_{\rho}^{1/2} D^TD
\Sigma_{\rho}^{1/2} 
\]
is non-negative definite. But because the matrices $\Sigma_{\rho}^{1/2}
D^T D \Sigma_{\rho}^{1/2}$ and $D\Sigma_{\rho}D^T$ have the same
nonzero eigenvalues, $\tilde{M}$ is non-negative definite if and only if
\beq 
\theta \frac{\rho^2}{\rho^2 -1} D \Sigma_{\rho} D^T \leq  I_q
\label{5.9}
\eeq
or equivalently
\beq
0 \leq \theta \leq  \beta_{\rho} \defn \frac{\rho^2-1}{ \rho^2 \lambda_1 
(D \Sigma_{\rho} D^T)} \label{5.10}
\eeq
where $\lambda_1 (D \Sigma_{\rho} D^T)$ is the largest 
eigenvalue of $D\Sigma_{\rho} D^T$. It is strictly positive 
since $\Sigma_{\rho}$ is positive definite and $D$ has full 
row rank.
\vskip 2ex

\begin{lemma}
\label{lem4}
If the initial variance $P_0$ for the risk-sensitive Riccati
equation (\ref{4.4}) (or equivalently (\ref{5.2})) satisfies
$0 < P_0 \leq \Sigma_{\rho}$ and $0 \leq \theta \leq \beta_{\rho}$,
the entire trajectory of the recursion $P_{t+1} = r^{\theta}
(P_t)$ satisfies $0 < P_t \leq \Sigma_{\rho}$, so $V_t >0$.
Furthermore, for $P_0 = \Sigma_{\rho}$, the sequence $P_t$
is monotone decreasing.
\end{lemma}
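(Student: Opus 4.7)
The plan is to prove both claims by induction on $t$. The inductive hypothesis is $0 < P_t \leq \Sigma_\rho$; from it I derive $V_t > 0$ and $0 < P_{t+1} \leq \Sigma_\rho$, using the Joseph-like form (\ref{5.2}) of the risk-sensitive Riccati equation with the stabilizing gain $G$ chosen above together with the ALE (\ref{5.6}). Note that the same $G$ serves throughout since the recursion does not depend on the time-varying choice of $G$.

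First, assuming $P_t \leq \Sigma_\rho$, I invert to get $V_t^{-1} = P_t^{-1} - \theta D^TD \geq \Sigma_\rho^{-1} - \theta D^TD$, and then exploit the fact that the range $0 \leq \theta \leq \beta_\rho$ was designed precisely so that $(1-\rho^{-2})\Sigma_\rho^{-1} - \theta D^TD \geq 0$, by (\ref{5.8})--(\ref{5.10}). This yields $V_t^{-1} \geq \rho^{-2} \Sigma_\rho^{-1} > 0$, hence $V_t > 0$ with $V_t \leq \rho^2 \Sigma_\rho$. The main step is then to bound $P_{t+1}$: in identity (\ref{5.2}) the subtracted term is a congruence of $(R_t^\nu)^{-1} > 0$ and is nonnegative definite, so
\[
P_{t+1} \leq (A-GC) V_t (A-GC)^T + BB^T + GG^T \leq \rho^2 (A-GC) \Sigma_\rho (A-GC)^T + BB^T + GG^T = \Sigma_\rho,
\]
where the final equality is exactly the ALE (\ref{5.6}). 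Positivity $P_{t+1} > 0$ follows from $V_t > 0$, $P_0 > 0$, and reachability of $(A,B)$ by the standard Kalman/Riccati argument.

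For the monotone decrease claim, I set $P_0 = \Sigma_\rho$: the preceding argument already gives $P_1 \leq \Sigma_\rho = P_0$. Because $P_0^{-1} - \theta D^TD > 0$ by the first paragraph, Lemma \ref{lem3} applies and produces $P_2 = r^\theta(P_1) \leq r^\theta(P_0) = P_1$. Iterating, each successive $P_t$ still satisfies $P_t \leq \Sigma_\rho$ so that $P_t^{-1} - \theta D^TD \geq \Sigma_\rho^{-1} - \theta D^TD > 0$ remains valid, and Lemma \ref{lem3} propagates $P_{t+1} \leq P_t$ for all $t \geq 0$.

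The main obstacle is the pointwise bound $r^\theta(\Sigma_\rho) \leq \Sigma_\rho$, which is not at all apparent from the raw form (\ref{4.4}) but falls out cleanly once one rewrites the Riccati map with the free stabilizing gain $G$ and matches the dilation $V_t \leq \rho^2\Sigma_\rho$ with the factor $\rho^2$ appearing in the ALE. The seemingly redundant introduction of $G$, which leaves the covariance recursion invariant because $y_t$ and $y_t^S = y_t - C\hat{x}_t^S$ are causally equivalent, is precisely what unlocks this comparison.
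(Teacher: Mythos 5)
Your proof is correct and follows essentially the same route as the paper's: both hinge on the Joseph-like form (\ref{5.2}) with the stabilizing gain $G$, the ALE (\ref{5.6}), and the fact that $0\leq\theta\leq\beta_{\rho}$ makes $M\geq 0$ so that $(\Sigma_{\rho}^{-1}-\theta D^TD)^{-1}\leq\rho^{2}\Sigma_{\rho}$. The only (immaterial) difference is organizational --- you propagate the bound $P_t\leq\Sigma_{\rho}$ directly at every step, whereas the paper establishes $r^{\theta}(\Sigma_{\rho})\leq\Sigma_{\rho}$ once and then invokes the monotonicity of Lemma \ref{lem3}, which you also use for the monotone-decrease claim.
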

\vskip 2ex

\begin{proof}
Suppose first that $P_0 = \Sigma_{\rho}$. Then the
non-negative definiteness of the matrix $M$ in (\ref{5.8})
implies $V_0 >0$. By subtracting (\ref{5.2}) for $t=0$ from
ALE (\ref{5.6}), we obtain
\bea
\lefteqn{\Sigma_{\rho} - P_1 =
(A-GC)(\rho^2 \Sigma_{\rho}
- (\Sigma_{\rho}^{-1} - \theta D^TD)^{-1})(A-GC)^T} \nnl
&&+ [(A-GC)(\Sigma_{\rho}^{-1}-\theta D^TD)^{-1}C^T -G]\nnl
&& \times (R_0^{\nu})^{-1}
[(A-GC)(\Sigma_{\rho}^{-1} - \theta D^T D)^{-1}C^T -G]^T \: .
\label{5.11} 
\eea
But when $M$ is non-negative definite, the matrix
\[
\rho^2 \Sigma_{\rho} - (\Sigma_{\rho}^{-1} -\theta D^T D)^{-1}
\]
appearing in the first term of the right hand side of (\ref{5.11})
is non-negative definite, which implies 
\beq
P_1 = r^{\theta} (\Sigma_{\rho}) \leq P_0= \Sigma_{\rho} \:.
\label{5.12}
\eeq 
By induction, suppose that $P_t \leq P_{t-1}$. The motonicity
of $r^{\theta}$ implies
\[
P_{t+1} = r^{\theta} (P_t) \leq r^{\theta} (P_{t-1}) =P_t
\]
so $P_t$ is monotone decreasing.

Next, consider the case of an initial condition $P_0 \leq
\Sigma_{\rho}$. The monotonicity of $r^{\theta}$ implies
\[
P_1  = r^{\theta} (P_0) \leq r^{\theta} (\Sigma_{\rho}) 
\leq \Sigma_{\rho}
\]
where the last inequality uses (\ref{5.12}). Proceeding
by induction, we deduce that $P_t \leq \Sigma_{\rho}$
for all $t$. This implies
\[
P_t^{-1} \geq \Sigma_{\rho}^{-1} > \theta D^TD
\]
so $V_t > 0$ for all $t$.\qquad
\end{proof}
\vskip 2ex

\noindent
{\it Remarks:} 

\begin{remunerate}

\item[1)] For the risk-neutral case ($\theta=0$),
the solution $\Sigma_{\rho}$ of the ALE (\ref{5.6}) is
similar to an upper bound proposed for the positive 
definite solution of the algebraic Riccati equation (ARE) 
in \cite{DSW} (see also \cite{KP}), which was also shown
to yield a monotone decreasing sequence of iterates. 
However the construction of the upper bound given in
\cite{DSW} is purely algebraic, whereas for $\rho=1$
the covariance matrix $\Sigma_{\rho}$ can be interpreted
as the steady-state error variance of the suboptimal
filter (\ref{5.3}).

\item[2)] Since the bound $\beta_{\rho}$ for the
risk-sensitivity parameter depends on both $G$ and $\rho$,
it is of interest to determine if a choice of $G$
and $\rho$ makes the bound as large as possible. Note in
this respect that there exists a trade-off between
making $\beta_{\rho}$ as large as possible and enlarging    
the set $0 \leq  P_0 \leq \Sigma_{\rho}$ of allowable
initial conditions, since from (\ref{5.9}) in order to 
increase the range of $\theta$ values, $\Sigma_{\rho}$
must be as small as possible, which shrinks the domain of
allowable $P_0$s. A clue on how to select $G$ is provided
by the scalar case analysis presented in \cite[Chap. 9]{Whi}.
With $n=m=p=q=1$, if we select the gain $G=A/C$,
$A-GC=0$ so $\rho$ can be selected arbitrarily large,
and 
\[
\Sigma_{\rho} = \frac{A^2}{C^2} + B^2  
\]
for all $\rho$. Letting $\rho \rightarrow \infty$ in
(\ref{5.10}), the bound $\beta_{\rho}$ then coincides 
with the scalar case bound derived on p. 116 of
\cite{Whi}. This suggests that selecting a gain $G$
that moves all the eigenvalues of the closed-loop 
observer $A-GC$ to zero is likely to yield a
satisfactory upper bound $\beta_{\rho}$. Note that in the
multivariable case, $A-GC$ cannot in general be
set to zero by selecting the gain matrix $G$, but
the characteristic polynomial and some additional
parameters (when $p>1$) can be assigned arbitrarily
\cite[Chap. 7]{Kai}. Unfortunately, as will be
demonstrated on an example in the next section,
the gain $G$ which assigns all the eigenvalues
of $A-GC$ to zero does not necessarily yield the largest 
possible value of $\beta_{\rho}$ and a comprehensive
search over $G$ and $\rho$ is usually required to
make $\beta_{\rho}$ as large as possible.  

\end{remunerate}

By assembling the preliminary results of the current
and previous sections, we obtain the following convergence
theorem for risk-sensitive filters.
\vskip 2ex

\begin{theorem}
\label{theo2}
Assume that in system (\ref{3.1})--(\ref{3.2}), the pairs 
$(A,B)$ and $(C,A)$ are reachable and observable. Then
if $0 \leq \theta < \tau_N$ and $\theta \leq \beta_{\rho}$
with $N \geq n$, the risk-sensitive Riccati map $r^{\theta}$
has a unique positive definite fixed point $P$ such
that $P^{-1} - \theta D^TD >0$. Furthermore, if the
initial condition $P_0$ of the Riccati equation 
satisfies $0 < P_0 \leq \Sigma_{\rho}$, the entire 
trajectory of iteration $P_{t+1} = r^{\theta} (P_t)$
stays in ${\cal P}$, satisfies $V_t >0$ and tends to $P$.
In this case the limit $K$ of filtering gain
$K_t$ as $t \rightarrow \infty$ has the property that 
$A-KC$ is stable.
\end{theorem}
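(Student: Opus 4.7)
My plan has four main steps. First, I would establish the existence and uniqueness of a fixed point for the block map $r_d^{\theta}=(r^{\theta})^N$ in $\mathcal{P}$. Under the hypotheses $(A,B)$ reachable, $(C,A)$ observable, $N\geq n$, and $\theta<\tau_N$, the preceding section shows that both Gramians $\Omega_N^{\theta}$ and $W_N^{\theta}$ are positive definite. The map (\ref{4.14}) then has exactly the form (\ref{2.6}) with $M=\alpha_N^{\theta}$, $\Omega=\Omega_N^{\theta}$, $W=W_N^{\theta}$, so Lemma \ref{lem1} gives that $r_d^{\theta}$ is a strict contraction on $(\mathcal{P},d)$, and Banach's fixed-point theorem supplies a unique $P\in\mathcal{P}$ with $P=r_d^{\theta}(P)$.

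Second, I must promote $P$ to a fixed point of $r^{\theta}$ itself, which requires showing $V=(P^{-1}-\theta D^TD)^{-1}>0$. The idea is to sandwich $P$ below $\Sigma_{\rho}$. Running the iteration from $P_0=\Sigma_{\rho}$, Lemma \ref{lem4} yields a monotone decreasing sequence $P_t\in(0,\Sigma_{\rho}]$ with $V_t>0$ for all $t$. The subsequence $P_{kN}$ evolves under $r_d^{\theta}$, so by the Banach contraction from step one it converges to $P$; monotonicity then gives $P\leq\Sigma_{\rho}$, which implies $P^{-1}\geq\Sigma_{\rho}^{-1}>\theta D^TD$, so $V>0$ and $r^{\theta}(P)\in\mathcal{P}$. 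Applying $r^{\theta}$ to both sides of $P=(r^{\theta})^N(P)$ shows that $r^{\theta}(P)$ is another fixed point of $r_d^{\theta}$ lying in $\mathcal{P}$; uniqueness forces $r^{\theta}(P)=P$.

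Third, for an arbitrary initial condition $0<P_0\leq\Sigma_{\rho}$, Lemma \ref{lem4} again guarantees $0<P_t\leq\Sigma_{\rho}$ and $V_t>0$ for every $t$, so the iterates never leave $\mathcal{P}$. The sampled sequence $P_{kN}$ then satisfies $P_{kN}=r_d^{\theta}(P_{(k-1)N})$ and converges to $P$ by the contraction. For each fixed $j\in\{0,\dots,N-1\}$, the map $(r^{\theta})^j$ is continuous on the region $\{Q\in\mathcal{P}:Q^{-1}-\theta D^TD>0\}$ which contains all iterates, so $P_{kN+j}=(r^{\theta})^j(P_{kN})\to (r^{\theta})^j(P)=P$, giving $P_t\to P$ along the full sequence, and correspondingly $K_t\to K$ through (\ref{4.2}).

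Finally, for stability of $A-KC$, I would pass to the limit in (\ref{4.5}) to obtain
\[
P=(A-KC)V(A-KC)^T+BB^T+KK^T,
\]
where $V\geq P$ since $P^{-1}\geq P^{-1}-\theta D^TD$. If $z\neq 0$ satisfies $z^T(A-KC)=\lambda z^T$, then
\[
z^TPz=|\lambda|^2 z^TVz+\|z^TB\|^2+\|z^TK\|^2\geq|\lambda|^2 z^TPz+\|z^TB\|^2+\|z^TK\|^2.
\]
The case $|\lambda|>1$ is immediately contradictory since $P>0$, while $|\lambda|=1$ forces $z^TB=0$ and $z^TK=0$, whence $z^TA=\lambda z^T$, violating reachability of $(A,B)$ by the PBH test. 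The principal obstacle in this plan is step two: because $r^{\theta}$ does not preserve $\mathcal{P}$, the fixed point manufactured by the contraction argument need not a priori satisfy the constraint $V>0$, and one needs the quantitative envelope $\Sigma_{\rho}$ built in Section \ref{sec:posit} and the monotone-decreasing trajectory from $\Sigma_{\rho}$ to certify that $P$ lives inside the admissible region.
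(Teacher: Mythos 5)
Your proposal is correct and follows essentially the same route as the paper: the strict contraction of the $N$-fold block map $r_d^{\theta}=(r^{\theta})^N$ from Section \ref{sec:contrisk}, the invariant region $0<P_t\leq\Sigma_{\rho}$ from Lemma \ref{lem4} to certify $V_t>0$ and $P^{-1}-\theta D^TD>0$, and a Lyapunov/PBH argument on the risk-sensitive ARE for stability of $A-KC$. You simply make explicit several steps the paper leaves terse (the passage from the sampled subsequence $P_{kN}$ to the full sequence, and the eigenvector argument behind the Lyapunov stability claim), which is a welcome elaboration rather than a different method.
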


\begin{proof}
Since the trajectory $P_t$ stays in
${\cal P}$ and satisfies $V_t > 0$, and the $N$-fold
operator $r_d^{\theta} = (r^{\theta})^N$ has a unique
fixed point $P$ in ${\cal P}$, the sequence $P_t$
must tend to $P$, and $P$ must be such that $P^{-1}
- \theta D^TD >0$. Then the stability of $A-KC$ can be
established by applying Lyapunov stability theory to 
the risk-sensitive ARE.
\[
P = (A-KC)(P^{-1} -\theta D^TD)^{-1}(A-KC)^T + BB^T + KK^T \: .
\]
\qquad
\end{proof} 
\vskip 2ex

This theorem answers in the affirmative the question 
posed in \cite{BS} whether it is possible to specify
a-priori a range of risk-sensitivity parameters
$\theta$ and initial conditions such that the 
risk-sensitive Riccati equation admits a solution.
On the other hand, it leaves open the computation of
the maximum value of $\theta$ (its breakdown value in
the terminology of \cite{Whi}) for which a solution
exists, which corresponds to the optimal $H^{\infty}$
filter.

\section{Example}
\label{sec:examp}

To illustrate our results, we consider a system with
\[
A= \bmat {cc}
0.1 & 1\\
0 & 1.2
\emat 
\hspace*{0.3in} C = \bmat {cc} 1 & -1 \emat
\]
and $B=D=I_2$. Note that $A$ is unstable, $(A,B)$ is reachable,
but the pair $(C,A)$ is barely observable, since the eigenvector
\[
p= \bmat {c}
1 \\ 1.1
\emat
\] 
corresponding to the eigenvalue $\lambda=1.2$ is at
a $92.72$ degree angle with respect to $C$. In this case,
for $N=2$, the largest eigenvalue of matrix ${\cal L}_2
(I_4 + {\cal H}_2^T{\cal H}_2)^{-1}{\cal L}_2$ equals $1$, 
so $\theta_2 =2$. To evaluate the value $\tau_2$ for which
$\Omega_2^{\theta}$ becomes singular, the smallest eigenvalue
of Gramian $\Omega_2^{\theta}$ is plotted in \fig{laOm2}
as a function of $\theta$ for $0 \leq \theta \leq 2 \times 10^{-3}$.
For this example, it decreases linearly, and becomes
negative at $\tau_2 = 0.715 \times 10^{-3}$. For completeness,
the smallest eigenvalue of reachability Gramian $W_2^{\theta}$
is plotted in \fig{laW2} over the same range of $\theta$.
It is monotone increasing, as expected, but the rate of increase
is very small, since $\lambda_2(W_2^{\theta})$ varies from
$1.002828$ to $1.02831$. Note that although we have selected
$N=2$ here, larger values of $N$ can be considered, and in
fact as $N$ increases, $\tau_N$ increases and $\theta_N$
decreases, and for this example both values tend to
$1.33 \times 10^{-3}$ for large $N$.
\vskip 2ex

\bef[htb]
\centering
\includegraphics[width =3.5in, height=3in]{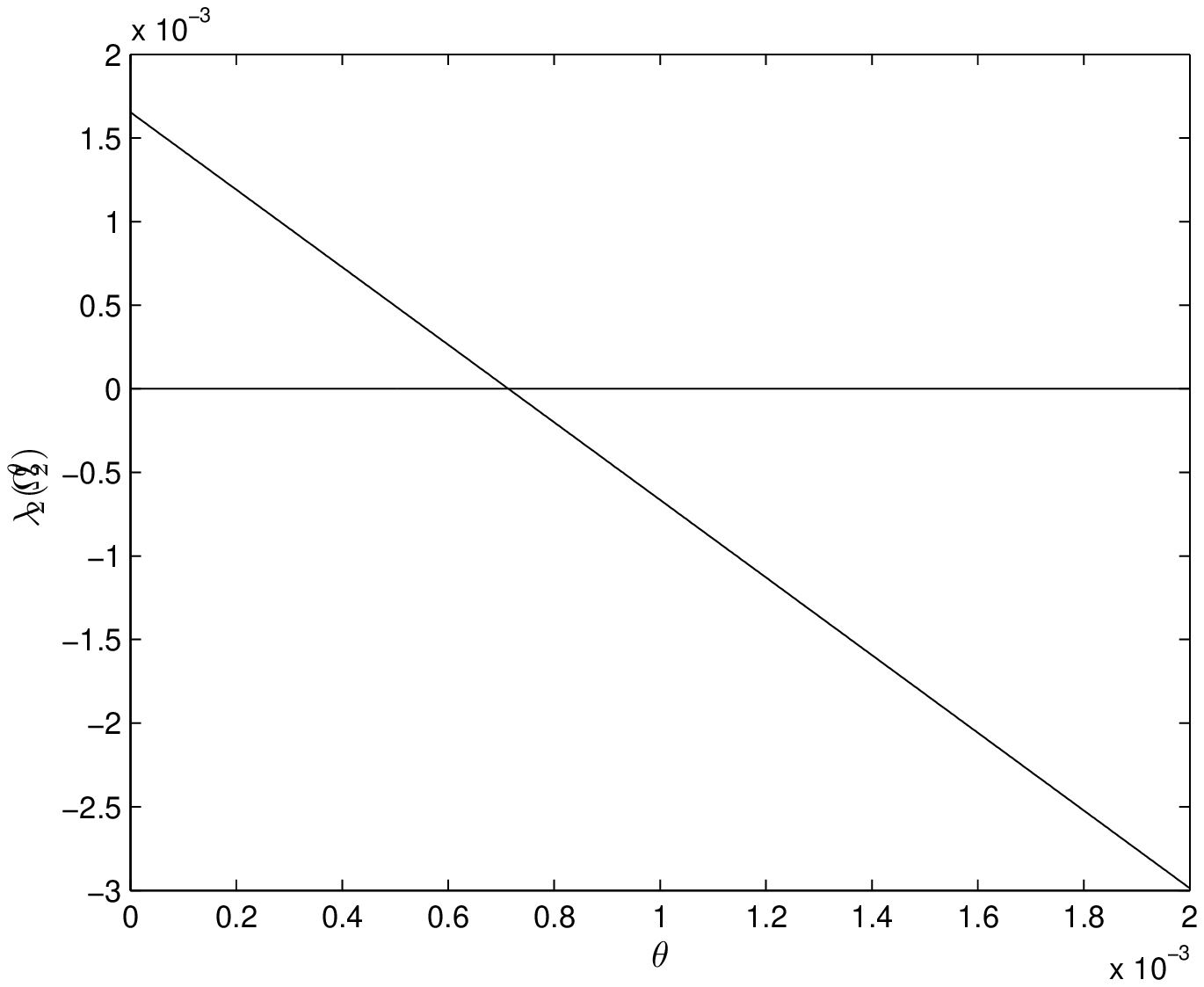}
\caption{Smallest eigenvalue of observability Gramian
$\Omega_2^{\theta}$ for $0 \leq \theta \leq 2 \times 10^{-3}$.}
\label{laOm2}
\eef

\bef[htb]
\centering
\includegraphics[width =3.5in, height=3.5in]{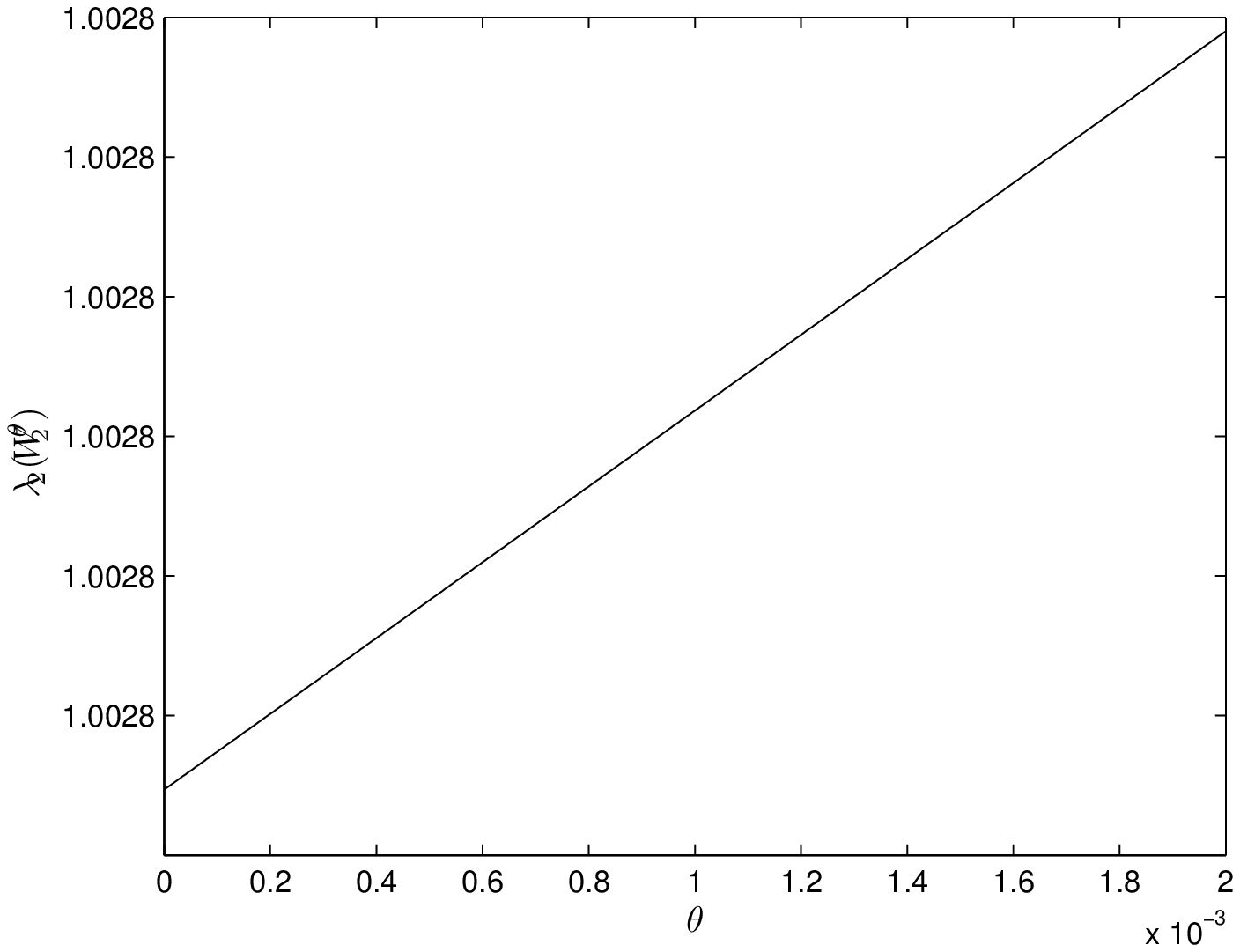}
\caption{Smallest eigenvalue of reachability Gramian
$W_2^{\theta}$ for $0 \leq \theta \leq 2 \times 10^{-3}$.}
\label{laW2}
\eef

Next, to evaluate $\beta_{\rho}$, we observe that with
the gain matrix
\beq
G= \bmat {c}
-13.1\\
-14.4
\emat
\label{6.1}
\eeq
the closed-lood matrix 
\[
A-GC = \bmat {cc}
13.2 & -12.1\\
14.4 & -13.2
\emat
\]
is nilpotent, i.e., its eigenvalues are zero. Note however that
$G$ is rather large, which reflects the weak observability 
of the system. In this case, if we select $\rho=2$, the
solution $\Sigma_2$ of the Lyapunov equation (\ref{5.6}) is
\[
\Sigma_2 = 10^{3} \bmat {cc} 
1.4622 & 1.5954 \\
1.5954 &  1.7431
\emat \: .
\]  
Its largest eigenvalue is $\lambda_1 (\Sigma_2) = 3.2042 \times 10^3$
and from (\ref{5.10}), we obtain $\beta_2 = 2.3407 \times 10^{-4}$.
This bound is significantly smaller than $\tau_2$. To
illustrate Lemma \ref{lem4}, the risk-sensitive Riccati iteration
$P_{t+1} = r^{\theta} (P_t)$ is simulated with 
$\theta = \beta_2$ and initial condition $P_0 = \Sigma_2$.
The two eigenvalues of $P_t$ and $V_t$ are plotted as
a function of $t$ for $0 \leq t \leq 10$ in \fig{eigP}
and \fig{eigV}, respectively. As expected, the eigenvalues 
remain positive and are monotone decreasing. The 
monotone decreasing property of the eigenvalues is due
to the fact that if two $n \times n$ positive definite 
matrices $P$ and $Q$ are such that $P \geq Q$ and if the 
eigenvalues of $P$ and $Q$ are sorted in decreasing
order, then $\lambda_i (P) \geq \lambda_i (Q)$ for $1 \leq i
\leq n$. In other words, the eigenvalues follow the
partial order of positive definite matrices. Since 
according to Lemma \ref{lem4}, the sequence $P_t$ is monotone
decreasing, so are its eigenvalues. The figures indicate
that the risk-sensitive Riccati equation converges very 
quickly, after 4 or 5 iterations. Note that 
if $P$ denotes the limit of $P_t$, its smallest
eigenvalue is $1.003$, but the other eigenvalue is
much larger and equals $332.4$. This reflects our earlier 
observation that one of the modes of the system is barely 
observable. The eigenvalues of the matrix $A-KC$ for
the estimation error dynamics are $0.034$ and $0.776$,
so the filter is stable, as expected.
\vskip 2ex

\bef[htb]
\centering
\includegraphics[width =4in, height=6in]{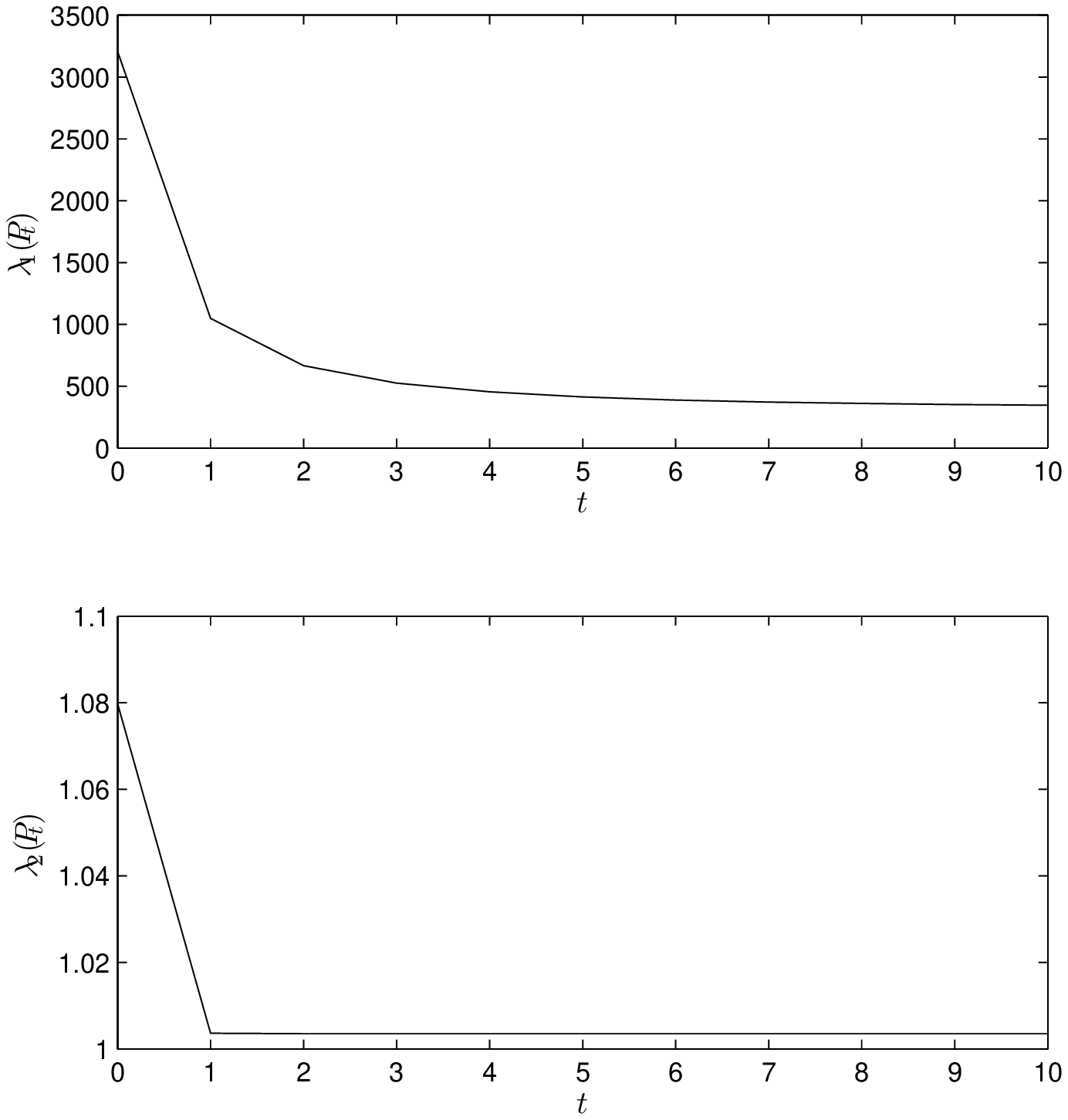}
\caption{Eigenvalues of Riccati solution $P_t$ for 
$0 \leq t \leq 11$ with $\theta = \beta_2$ and initial 
condition $P_0 = \Sigma_2$.}
\label{eigP}
\eef

\bef[htb]
\centering
\includegraphics[width =4in, height=6in]{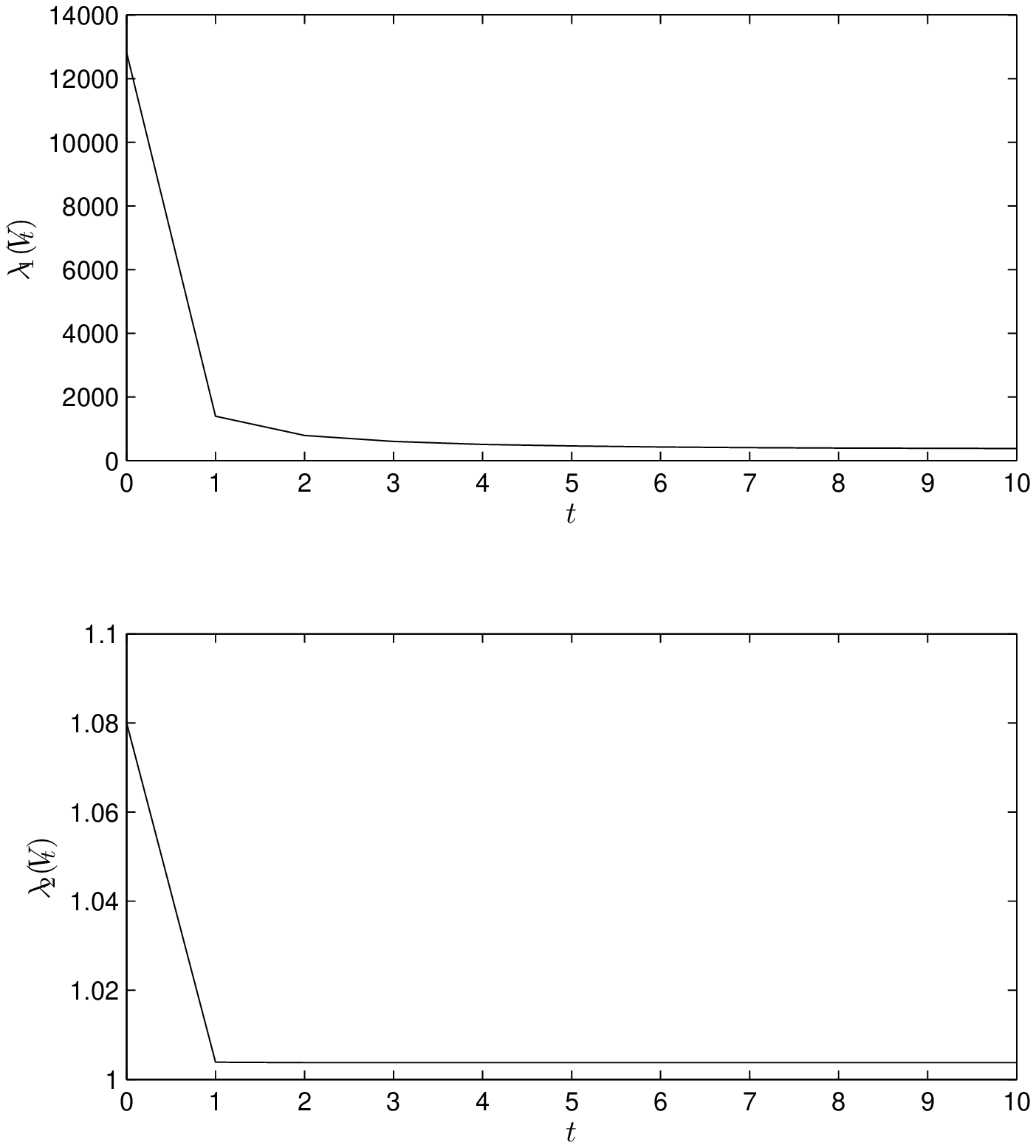}
\caption{Eigenvalues of $V_t$ for $0 \leq t \leq 11$ 
with $\theta = \beta_2$ and initial condition  
$P_0 = \Sigma_2$.}
\label{eigV}
\eef

\noindent

Finally, to illustrate the onset of breakdown as $\theta$
increases, the two eigenvalues of the fixed point solution
$P^{\theta}$ of $r^{\theta}$ and of the corresponding matrix
$V^{\theta} = ((P^{\theta})^{-1} - \theta I_2)^{-1}$ are plotted
as a function of $\theta$ in \fig{thdepP} and \fig{thdepV},
respectively, for $0 \leq \theta \leq 0.95 \times 10^{-3}$.
It is known \cite[p. 379]{HSK1} that $P^{\theta}$ is a
monotone increasing function of $\theta$, and as expected
the eigenvalues of $P^{\theta}$ are monotone increasing.
However, while the change in the smaller eigenvalue
is barely noticeable, the eigenvalue representing the
weakly observable mode increases rapidly with $\theta$.
As $\theta$ increases, the eigenvalues of $V^{\theta}$
start diverging, and the breakdown value of
$\theta$ for this example is just above $0.95
\times 10^{-3}$. This value is significantly higher
than the bound $\beta_2$ obtained by applying Lemma \ref{lem4}
with the gain (\ref{6.1}), suggesting that the bound can be 
improved. In fact, an exhaustive search over $G$ and $\rho$ 
showed that $\beta_{\rho}$ is maximized by selecting
\[
G = \bmat {c}
   -7.2196\\
  -7.9753
\emat
\]
and $\rho=1.2849$, in which case $\beta_{\rho} = 0.4824 \times
10^{-3}$.
\vskip 2ex

\bef[htb]
\centering
\includegraphics[width =4in, height=6in]{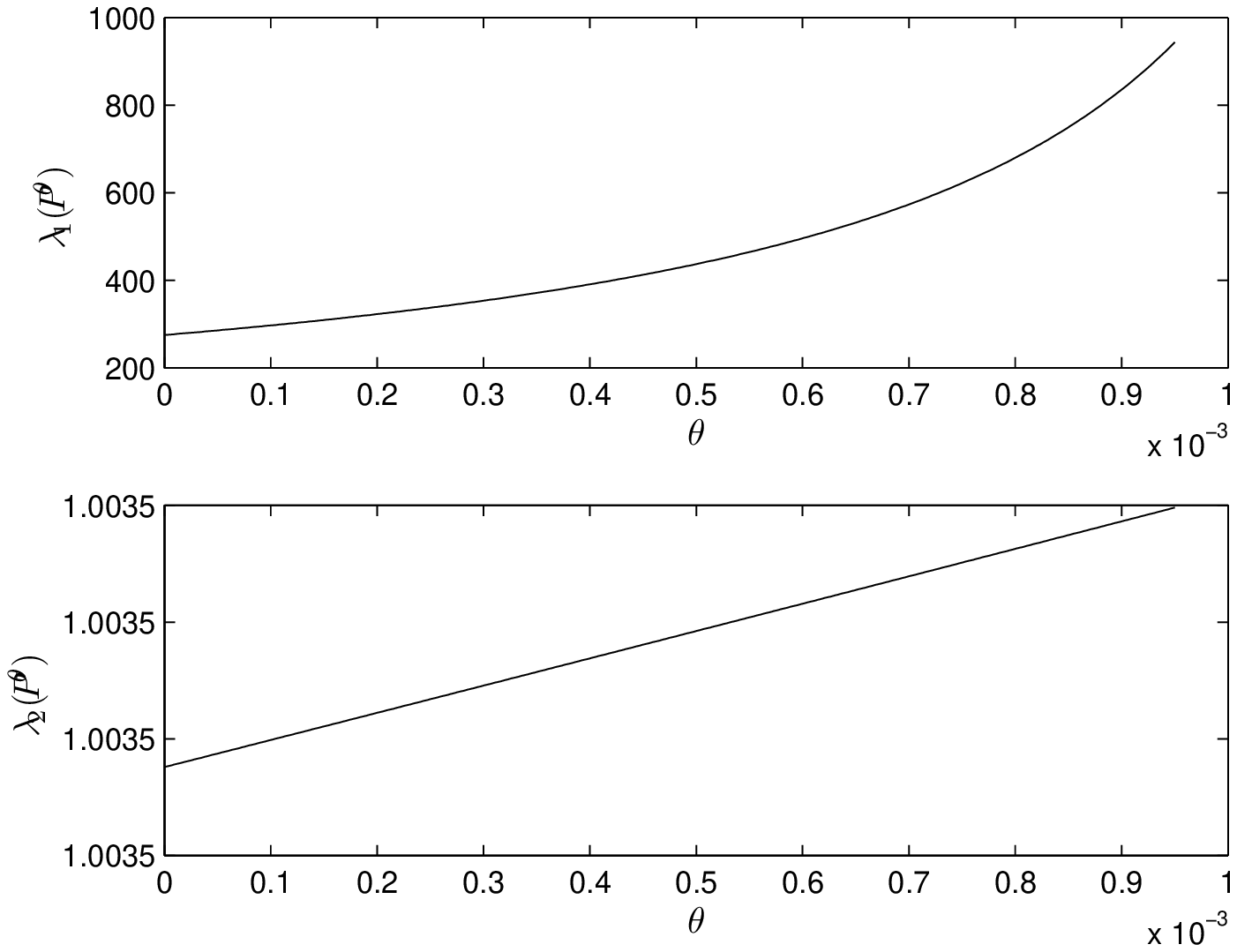}
\caption{Eigenvalues of Riccati fixed point $P^{\theta}$
in function of $\theta$ for $0 \leq \theta \leq 0.95 \times 10^{-3}$.}
\label{thdepP}
\eef

\bef[htb]
\centering
\includegraphics[width =4in, height=6in]{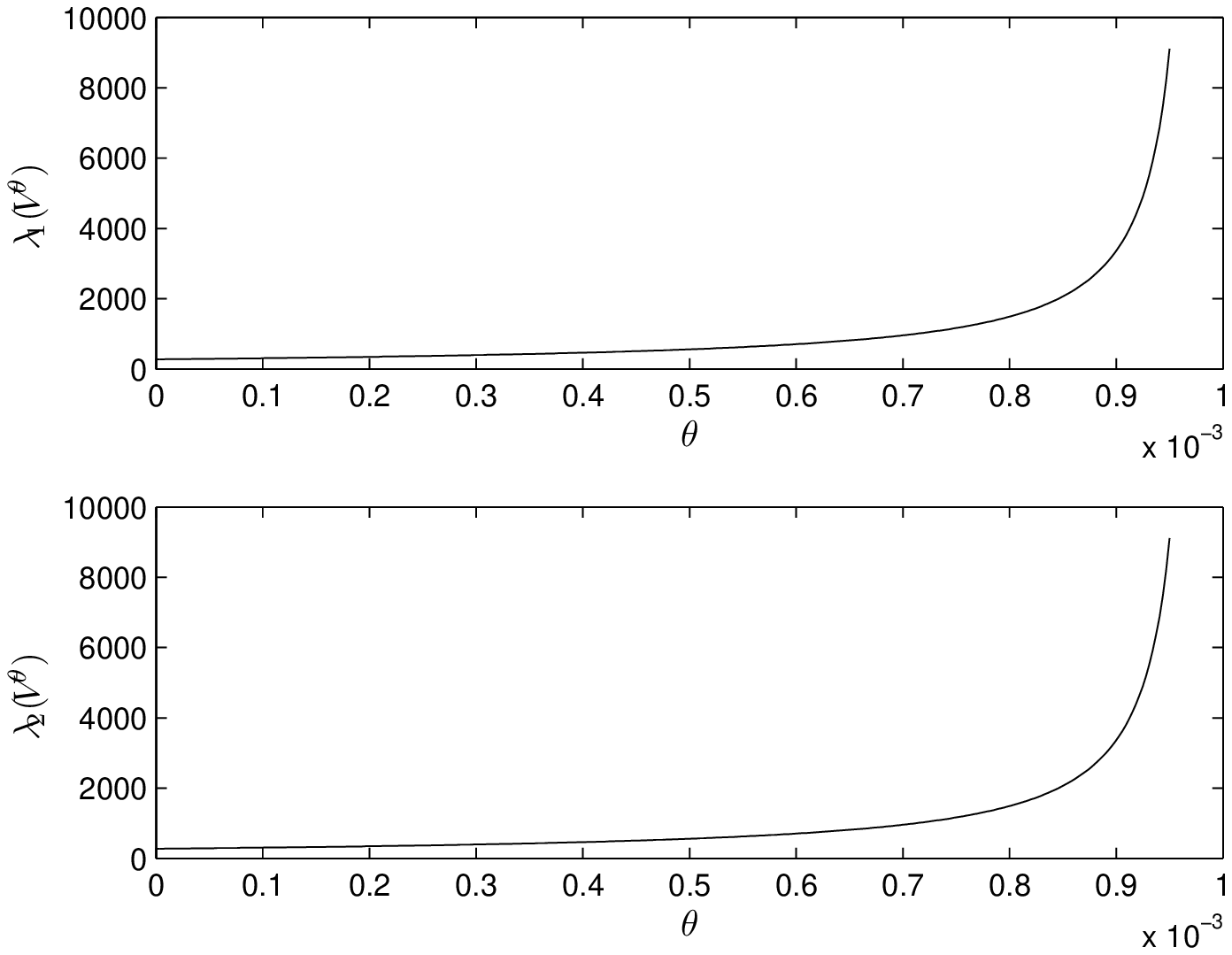}
\caption{Eigenvalues of $V^{\theta}$ in function of
$\theta$ for $0 \leq \theta \leq 0.95 \times 10^{-3}$.}
\label{thdepV}
\eef

\section{Conclusions}
\label{sec:conc}

A convergence analysis of risk-sensitive filters has been presented.
It relies on extending Bougerol's contraction analysis
of risk-neutral Riccati equations to the risk-sensitive case.
This was accomplished by considering a block-filtering
implementation of the $N$-fold Riccati map and showning that
this map is strictly contractive as long as an observability
Wronskian depending on the risk-sensitivity parameter 
remains positive definite. A second condition was derived for the 
risk-sensitivity parameter and initial error variance to
ensure that the trajectory of the risk-sensitive Riccati
iteration stays positive definite at all times. The
two conditions obtained can be viewed as multivariable
versions of conditions obtained earlier by Whittle 
\cite[Chap. 9]{Whi} for the scalar case.

Although the results we have presented concern filters
with a constant risk-sensitivity parameter $\theta$,
a closely related class of robust filters was derived
recently \cite{LN} by assigning a fixed relative entropy tolerance
to increments of the state-space model. In this case, the
risk-sensitivity parameter is time-varying, but the tolerance
is fixed, and based on computer simulations, it appears
that the risk-sensitivity parameter and associated filter
always converge as long as the relative entropy tolerance
remains small. Since Bougerol's analysis \cite{Bou} is 
applicable to systems with random fluctuations, it is 
reasonable to wonder if the analysis presented here
can be extended to establish the convergence of the
filters discussed in \cite{LN}.

\bibliography{contrac}

\end{document}